\normalfont\fontsize{11}{12}\bfseries}{\thesection}{1em}{}
\titleformat{\subsection}[runin]
  {\normalfont\fontsize{10}{11}\bfseries}{\thesubsection}{.5em}{}
\theoremstyle{plain}
  \newtheorem{thm}{Theorem}[section]
  \newtheorem{cor}[thm]{Corollary}
  \newtheorem{lemma}[thm]{Lemma}
  \newtheorem{prop}[thm]{Proposition}
\theoremstyle{definition}
\newtheorem{dfn}[thm]{Definition}
\newtheorem{rmk}[thm]{Remark}
\newtheorem{ex}[thm]{Example}
\numberwithin{equation}{section}
\newcommand{\f}{\mathcal F}
\newcommand{\p}{\mathcal P}
\newcommand{\e}{\mathcal E}
\renewcommand{\hom}{\mathcal{H}om}
\newcommand{\ext}{\mathcal{E}xt_f}
\renewcommand{\dot}{{\boldsymbol{\cdot}}}
\newcommand{\M}{{M_1 \times M_2}}
\newcommand{\bl}{{Bl_{\Gamma}(\M)}}
\newcommand{\m}{M_\sigma(2\lambda_1+\lambda_2)}
\renewcommand{\o}{{\mathcal O}}
\newcommand{\K}{K_{top}(Ku(Y))}
\renewcommand{\k}{K_{num}(Ku(Y))}
\title{The Voisin Map via Families of Extensions}
\author{Huachen Chen}
\date{}
\begin{document}
\maketitle

\begin{abstract}
 \noindent We prove that given a cubic fourfold $Y$ not containing any plane, the Voisin map $v: F(Y)\times F(Y) \dashrightarrow Z(Y)$ constructed in \cite{Voi}, where $F(Y)$ is the variety of lines and $Z(Y)$ is the Lehn-Lehn-Sorger-van Straten eightfold \cite{LLSS}, can be resolved by blowing up the incident locus $\Gamma \subset F(Y)\times F(Y)$ endowed with the reduced scheme structure. Moreover, if $Y$ is very general, then this blowup is a relative Quot scheme over $Z(Y)$ parametrizing quotients in a heart of a Kuznetsov component of $Y.$ 
\end{abstract}

\section{Introduction}

The derived category $D^b(Y)$ of a smooth cubic fourfold $Y$ decomposes $$D^b(Y)=\langle Ku(Y), \mathcal O_Y, \mathcal O_Y(1), \mathcal O_Y(2)\rangle$$ into a relatively simple part (an exceptional collection) and a highly nontrivial subcategory $Ku(Y)$, the Kuznetsov component, which is believed to encode the geometry of $Y$ \cite{Kuz4}. One example of this principle is the work of \cite{LLMS} and \cite{LPZ}, which shows that the variety of lines $F(Y)$ and the LLSvS eightfold $Z(Y)$ constructed in \cite{LLSS} (with the assumption that $Y$ does not contain any plane) both can be naturally interpreted as moduli of stable objects in $Ku(Y),$ with respect to a Bridgeland stability condition on $Ku(Y)$ constructed in \cite{BLMS}.  

In \cite{Voi}, C. Voisin established, in particular, a degree six rational map $v: F(Y)\times F(Y) \dashrightarrow Z(Y)$ using geometry of the cubic fourfold. In the light of \cite{LLMS} and \cite{LPZ}, one can reinterpret the Voisin map via families of extensions in $Ku(Y).$ Indeed, there are two moduli spaces $M_\sigma(\lambda_1)$ and $M_\sigma(\lambda_1+\lambda_2)$ that are both isomorphic to $F(Y),$ and $M_\sigma(2\lambda_1+\lambda_2)$ is isomorphic to $Z(Y),$ where $\lambda_i$ are numerical classes (see \cref{sec2.1}) and $\sigma$ is a Bridgeland stability condition on $Ku(Y)$ (see \cite{BLMS} and \cref{sec2.2}), such that for a general point $(F, P)\in M_\sigma(\lambda_1) \times M_\sigma(\lambda_1+\lambda_2),$ one has $Ext^1(P, F)\cong \mathbb C.$ The unique nontrivial extension gives a stable object of class $2\lambda_1+\lambda_2$ and therefore defines $$v: M_\sigma(\lambda_1) \times M_\sigma(\lambda_1+\lambda_2) \dashrightarrow \m,$$ which coincides with Voisin's construction.

The purpose of this note is to study the Voisin map by addressing how the family of extensions spread along the indeterminacy locus. 

 Let $\Gamma$ be the incident locus $\{(L_1, L_2)\in F(Y)\times F(Y): L_1\cap L_2 \neq \emptyset \}.$ As we will see, if $(F, P)\in \Gamma,$ identifying $F(Y)\times F(Y)$ with $M_\sigma(\lambda_1)\times M_\sigma(\lambda_1+\lambda_2),$ then $ext^1(P, F)>1.$ Thus, the map $v$ is not defined on $\Gamma$ (we notice that this agrees with a result of \cite{Mur}). Denoted by $\mathcal F$ and $\p$ the pullbacks of two universal families on $M_\sigma(\lambda_1)\times Y$ and $ M_\sigma(\lambda_1+\lambda_2)\times Y$, respectively, to $M_\sigma(\lambda_1)\times M_\sigma(\lambda_1+\lambda_2) \times Y.$ Let $\mathcal A$ be the heart of the t-structure associated to $\sigma.$ By considering a notion of families of extensions of $\p$ by $\f$ as in \cite{lange} (see \cref{foe}), we prove:

\begin{thm}[c.f. \cref{mainthm}]\label{thm1} Let $b: Bl_\Gamma (M_\sigma(\lambda_1)\times M_\sigma(\lambda_1+\lambda_2)) \to M_\sigma(\lambda_1)\times M_\sigma(\lambda_1+\lambda_2)$ be the blowup along the reduced scheme structure of $\Gamma$.
\begin{enumerate}[(a).]
    \item Suppose that $Y$ does not contain any plane. The Voisin map can be resolved by $b$:
{\center
\begin{tikzcd}[column sep=tiny]
 & Bl_{\Gamma}(M_\sigma(\lambda_1) \times M_\sigma(\lambda_1+\lambda_2))  \arrow[ld, "b"]{dashed} \arrow[rd, "q"]\\
M_\sigma(\lambda_1)\times M_\sigma(\lambda_1+\lambda_2) \arrow[rr, dashed, "v"]& & M_{\sigma}(2\lambda_1+\lambda_2)
      
\end{tikzcd}\par}

\item \label{quot} Moreover, if $Y$ is very general, then the blowup above is a relative Quot scheme over $\m$ parametrizing quotients in the heart $\mathcal A$ and of class $\lambda_1+\lambda_2.$
\end{enumerate}

\end{thm}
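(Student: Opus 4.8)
\emph{Proof strategy.} By construction $v$ is the ``extension map'': over $\M\setminus\Gamma$ it sends $(F,P)$ to the unique nonsplit extension $0\to F\to G\to P\to 0$ in $\mathcal A$, and $G$ lies in $\m$. I plan to resolve it by realising a universal family of such extensions in the sense of \cite{lange} (see \cref{foe}). Write $f\colon\M\times Y\to\M$ for the projection and record first the relative Ext-sheaves. $\sigma$-stability of the fibres of $\p$ and $\f$, together with the strict phase inequality $\phi(\lambda_1)<\phi(\lambda_1+\lambda_2)$, forces $\hom_f(\p,\f)=0$; since $Ku(Y)$ is $2$-Calabi--Yau one also has $\ext^i(\p,\f)=0$ for $i\notin\{1,2\}$, while $\ext^2(\p,\f)\cong\hom_f(\f,\p)^\vee$ is a torsion sheaf supported on $\Gamma$ (the generic fibre of $\hom_f(\f,\p)$ over $\Gamma$ being one-dimensional, and $0$ off $\Gamma$). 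Hence $\ext^1(\p,\f)$ is torsion-free of rank one, locally free exactly on $\M\setminus\Gamma$, and the Lange construction yields the projective bundle $\mathbb P_{\M}(\ext^1(\p,\f))$ — whose points over $(F,P)$ are the nonsplit classes in $Ext^1(P,F)$ up to scalar — carrying a universal extension $\f\to\e\to\p\otimes\o(1)\to\f[1]$. The plan is then: (1) identify $\mathbb P_{\M}(\ext^1(\p,\f))$ with $\bl$; (2) check that the fibres of $\e$ are $\sigma$-stable of class $2\lambda_1+\lambda_2$, so that the universal property of $\m$ yields a morphism $q$; (3) observe that $q$ restricts to $v$ over $\M\setminus\Gamma$, hence resolves it, which is part (a).

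Step (1). One first identifies the non-locally-free locus of $\ext^1(\p,\f)$ with $\Gamma$: it is exactly where $ext^1(P,F)$ jumps from $1$ to $2$, which I would check by a direct computation for a general incident pair in the spirit of \cite{LPZ} (the point $L_1\cap L_2$ making $Hom(F,P)\cong Ext^2(P,F)^\vee$ one-dimensional), recovering — compatibly with \cite{Mur} — that $v$ is undefined precisely on $\Gamma$. As $\Gamma$ has codimension $2$ in $\M$, this locus has the expected codimension, so $\ext^1(\p,\f)\cong\mathcal L\otimes\mathcal I$ for a line bundle $\mathcal L$ and $\mathcal I$ the ideal of the scheme structure on $\Gamma$ cut out by the first Fitting ideal of a minimal presentation $\mathcal K^0\xrightarrow{\psi}\mathcal K^1\twoheadrightarrow\ext^1(\p,\f)$ with $\operatorname{rk}\mathcal K^1=\operatorname{rk}\mathcal K^0+1$. \textbf{The main obstacle is to prove this Fitting scheme structure is reduced}, i.e. $\mathcal I=\mathcal I_{\Gamma_{\mathrm{red}}}$, and that $\Gamma_{\mathrm{red}}$ is a codimension-two local complete intersection; granting this, $\mathcal I_{\Gamma_{\mathrm{red}}}$ is of linear type, its Rees and symmetric algebras coincide, and $\mathbb P_{\M}(\ext^1(\p,\f))=\mathbb P_{\M}(\mathcal I_{\Gamma_{\mathrm{red}}})=\bl$. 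Particular care is needed over the sublocus of $\Gamma$ where $L_1=L_2$ (under the two identifications of the factors with $F(Y)$) or where $L_1,L_2$ span a plane, where $ext^1(P,F)$ may jump further and a hands-on local analysis is required.

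Step (2). Over a point of the exceptional divisor $E\subset\bl$ lying above $(F,P)\in\Gamma$ the fibre of $b$ is $\mathbb P(Ext^1(P,F))\cong\mathbb P^1$, and the corresponding fibres of $\e$ run over all nonsplit extensions $0\to F\to G\to P\to 0$ in $\mathcal A$; one must verify each such $G$ is $\sigma$-stable of class $2\lambda_1+\lambda_2$. A destabilising subobject would either split off $P$ (contradicting nonsplitness) or carry a numerical class incompatible with the central charge together with the structural constraints on $F,P$ from \cite{LPZ}; alternatively one may combine openness of (semi)stability with the valuative criterion, using properness of $\m$ and stability of the generic extension. Then $\e$ is a flat family of $\sigma$-stable objects of class $2\lambda_1+\lambda_2$, so the universal property of $\m$ produces $q\colon\bl\to\m$. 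Since $q|_{\bl\setminus E}=v\circ b|_{\bl\setminus E}$ by the definition of $v$ and $\m$ is separated, the triangle commutes; as $b$ is birational this finishes (a).

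Step (3) — part (b). The universal extension on $\bl\times Y$ gives, fibrewise, a surjection $\e\twoheadrightarrow\p\otimes\o(1)$ in $\mathcal A$ of class $\lambda_1+\lambda_2$ with kernel of class $\lambda_1$; together with $q$ this defines a morphism $\rho\colon\bl\to Q$, where $Q$ is the relative Quot scheme over $\m$ parametrising quotients in $\mathcal A$ of class $\lambda_1+\lambda_2$ — here I would invoke the general existence of Quot schemes in noetherian hearts of Bridgeland stability conditions to know that $Q$ is a separated scheme of finite type. For the inverse, let $\mathcal U_Q\twoheadrightarrow\mathcal Q$ be the universal quotient, with kernel $\mathcal K$ of class $\lambda_1$. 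This is where very generality of $Y$ enters: then $\k$ has rank $2$ and $\sigma$ has generic central charge, so no proper nonzero subclass of $\lambda_1$ or of $\lambda_1+\lambda_2$ is $Z$-parallel to it; combined with $\sigma$-stability of $\mathcal U_Q$ and $\phi(\lambda_1)<\phi(2\lambda_1+\lambda_2)<\phi(\lambda_1+\lambda_2)$, this forces $\mathcal K$ and $\mathcal Q$ to be families of $\sigma$-stable objects, hence a morphism $Q\to M_\sigma(\lambda_1)\times M_\sigma(\lambda_1+\lambda_2)=\M$. The tautological class of the nonsplit extension $0\to\mathcal K\to\mathcal U_Q\to\mathcal Q\to 0$ is a nowhere-vanishing section of (a line-bundle twist of) the relative $\mathcal{E}xt^1$ of $\mathcal Q$ by $\mathcal K$, which lifts this morphism through $b$ to $Q\to\mathbb P_{\M}(\ext^1(\p,\f))=\bl$; comparing universal objects shows this lift is inverse to $\rho$, whence $\bl\cong Q$. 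The remaining work is the formal input on Quot schemes in $\mathcal A$ and the ``$\sigma$-semistable $\Rightarrow$ $\sigma$-stable'' dichotomy for the classes $\lambda_1$ and $\lambda_1+\lambda_2$, which genuinely requires $Y$ to be very general.
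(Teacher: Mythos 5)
There is a genuine gap, and it sits exactly at the point the paper identifies as its main technical difficulty. Your Step (1) asserts that $\ext^1(\p,\f)$ is torsion-free of rank one, locally free precisely off $\Gamma$, and of the form $\mathcal L\otimes\mathcal I_\Gamma$, so that $\mathbb P_{\M}(\ext^1(\p,\f))$ is the blowup and its fibre over $(F,P)$ is $\mathbb P(Ext^1(P,F))$. This is false: $\ext^1(\p,\f)$ does \emph{not} commute with base change (its fibre at a point of $\Gamma$ stays one-dimensional while $Ext^1(P,F)$ jumps to dimension $2$ or $3$ there), and it is in fact an honest line bundle --- it is the kernel of a map of locally free sheaves $\mathcal W_1\to\mathcal W_2$ on the smooth variety $\M$, hence reflexive of rank one, whereas $\mathcal L\otimes I_\Gamma$ is not reflexive. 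Consequently $\mathbb P_{\M}(\ext^1(\p,\f))\cong\M$ and sees none of the jumping; Lange's identification of sections of $\ext^1$ with families of extensions (\cref{lange2.3}) is exactly the tool that is unavailable here. The sheaf isomorphic to $I_\Gamma$ is $\ext^1(\f,\p)$, the relative Ext in the \emph{opposite} direction, which does commute with base change. The paper's route is therefore to replace ``sections of $\ext^1(\p,\f)$'' by the functor $T\mapsto Hom(\o_T, Lg^*R\hom_f(\p,\f)[1])$ (derived pullback of the whole complex), show this still computes families of extensions over reduced bases (\cref{psi1}), and then use Grothendieck--Verdier duality together with the $2$-Calabi--Yau property of $Ku(Y)$ to identify $R\hom_f(\p,\f)^\vee[1]$ with $R\hom_f(\f,\p)[1]\cong I_\Gamma$; the nonsplitting-extension functor then becomes surjections $g^*I_\Gamma\twoheadrightarrow\mathcal L_T$, visibly represented by the blowup. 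Your worry about reducedness of the Fitting structure is handled there by the Eagon--Northcott resolution of $I_\Gamma$ attached to $\beta:\mathcal W_1\to\mathcal W_2$.

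The gap propagates to part (b): without a projective bundle enjoying base change there is no automatic universal extension on $\bl\times Y$; a priori the locally defined extensions $\e_i\to\p_i$ glue only up to a twist. The paper produces a genuine global universal quotient by a further argument: the map $b^*I_\Gamma\to\o_{Bl_\Gamma(\M)}$ corresponds to a \emph{global} family of extensions $\mathcal E'$ of $b^*\p$ by $b^*\f$ that splits along the exceptional divisor $E$, and an elementary modification of $\mathcal E'$ along $b^*\f|_E$ yields a nowhere-split global family $\e$ sitting in $0\to b^*\f(-E)\to\e\to b^*\p\to 0$, whose quotient map is the required universal object. Your Step (3) assumes such a global universal extension from the outset. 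The remaining ingredients you list --- stability of nonsplit extensions via \cite{LLMS} so that the universal property of $\m$ gives $q$, and the use of $\k\cong A_2$ to force the fibres of a family of quotients of class $\lambda_1+\lambda_2$ to be stable --- do match the paper's argument, but they cannot be run until the representability of the extension functor is established correctly.
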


The main idea is to consider a functor of families of non-splitting extensions of $\p$ by $\f$. Let $f: \M \times Y \to \M$ be the projection, define $R\hom_f:=Rf_*R\hom,$ and $\ext^i:= \mathcal H^i(R\hom_f).$ In \cite{lange}, it has been shown that the set of families of extensions (\cref{foe}) over a reduced scheme $g:T\to S$ is the same as $H^0(T, g^*\ext^1(\p, \f)),$ provided that $\ext^1(\p,\f)$ satisfies a condition called "commute with base change" (\cref{cwbc}). However, such a condition does not hold in our case. This can be fixed; indeed the right functor to consider should be $$(g:T\to S) \to Hom(\o_T, Lg^*R\hom_f(\p,\f)[1]),$$  which renders the condition "commute with base change" automatic. Meanwhile, this functor is still a sheaf on $S$ in our case, and consequently we have a bijection between $Hom(\o_T, Lg^*R\hom_f(\p,\f)[1])$ and the set of families of extensions of $\p_T$ by $\f_T$ (\cref{psi1}).

Using this observation, we can adapt the functor of families of non-splitting extensions to our case (\cref{psi2}), and then (in \cref{sec4}) show that it is represented by the blowup in \cref{thm1}, and obtain a morphism to $Z(Y)$. 


Once we prove part (a), the missing ingredient for part (b) is a universal relative quotient on the blowup. To obtain that, we take a global family of extensions on the blowup which splits along the exceptional divisor, and perform an elementary modification to get a nowhere-split global family. See \cref{sec4}.

\subsection*{Acknowledgements} The idea of studying the Voisin map using moduli and relative Quot schemes belongs to Arend Bayer and Aaron Bertram. I am grateful to them for leaving the question to me. Also, many thanks to Sukhendu Mehrotra, Alex Perry, Paolo Stellari, Franco Rota and Xiaolei Zhao for helpful discussions. Most of these discussions happened during a wonderful CIMI/FRG workshop in Toulouse, I would like to thank the organizers Marcello Bernardara and Emanuele Macr\`i.

\section{Review on Kuznetsov components, stability conditions and constant families of t-structures}

In this section, we first recall some results about Kuznetsov components of cubic fourfolds, Bridgeland stability conditions and moduli of stable objects that are crucial for us. Then in \cref{sec2.3} we put together \cite{Kuz} and \cite{AP}, \cite{Po} to clarify a slight technical issue, namely, $\f, \p$ in our case are not families of sheaves but complexes, and we would like to consider their extensions lie in a certain (sheaf of) hearts of t-structures.

\subsection{Kuznetsov components of cubic fourfolds.}\label{sec2.1} Let $Y$ be a smooth cubic fourfold, and $D^b(Y)$ be the bounded derived category of coherent sheaves on $Y.$ There is an exceptional collection $\langle \mathcal O_Y, \mathcal O_Y(1),\mathcal O_Y(2)\rangle$ of $D^b(Y)$ (see e.g. \cite{Kuz4}).

\begin{dfn}
The Kuznetsov component $Ku(Y)$ of $Y$ is defined as $$Ku(Y):=\{E\in D^b(X): R^\dot Hom(\mathcal O_Y(i), E)=0,\ i=0,1,2\}.$$
\end{dfn}

\noindent It is an admissible triangulated subcategory of $D^b(Y),$ i.e., the inclusion $i: Ku(Y)\hookrightarrow D^b(Y)$ admits a left (and right) adjoint $i^*: D^b(Y)\to Ku(Y)$ (and $i^!$). The Serre functor on $Ku(Y)$ is $(-)[2]$ \cite{Kuz2}.

 Let $K_{top}(Y)$ be the topological K-theory of $Y,$ $\chi(-,-)$ be the Euler pairing. We recall the definition of Mukai lattices of Kuznetsov components by Addington and Thomas \cite{AT}, see also \cite{BLMS}.
\begin{dfn} 
The topological Mukai lattice of $Ku(Y)$ is $K_{top}(Ku(Y)):=\{v\in K_{top}(Y): \chi([\o_Y(i)], v)=0, i=0,1,2 \}$ equipped with the Mukai pairing $(-,-):=-\chi(-,-)$. The (numerical) Mukai lattice $K_{num}(Ku(Y))$ is the image of the map $Ku(Y)\to \K$. For an object $E\in Ku(Y),$ refer to its class $[E]\in \k$ as its Mukai vector.
\end{dfn}

For any cubic fourfold $Y,$ the Mukai lattice $\k$ always contains two special classes: $$\lambda_1:= [i^*(\o_L(H))] \ \text{and} \ \lambda_2:=[i^*(\o_L(2H))],$$ where $L$ is a line in $Y.$ They generate a sublattice of $\k$ that is isomorphic to 
\begin{equation*}
    A_2 =
  \left( {\begin{array}{cc}
   2 & -1 \\
   -1 & 2 \\
  \end{array} } \right).
\end{equation*}

As shown in \cite[proposition 2.3]{AT}, if $Y$ is generic in the moduli of cubic fourfold, then $\k = A_2$. We call such a cubic fourfold very general.

\subsection{Stability conditions and Moduli}\label{sec2.2}
\begin{dfn}\cite{Bri}
A Bridgeland stability condition on $Ku(Y)$ is a pair $(Z, \mathcal A),$ where $Z: \k \rightarrow \mathbb C$ is a group homomorphism and $\mathcal A$ is a heart of a bounded t-structure of $Ku(Y)$ satisfying the following conditions:
\begin{enumerate}[(1)]
\item For any nonzero object $E \in \mathcal A,$ $Z(E):=Z([E])= r(E)e^{i\pi\phi(E)}, $ then $r(E)>0$ and $\phi(E)\in (0,1].$ ($\phi(E)$ is called the phase of $E$, and it defines a notion of semistability: an object $E\in \mathcal A$ is semistable if for any nonzero subobject $F\hookrightarrow E$, $\phi(F)\leq \phi(E).$)
\item For any object $E\in \mathcal A,$ E has a Harder-Narasimhan filtration $$0\hookrightarrow E_1 \hookrightarrow \ldots \hookrightarrow E_{n-1} \hookrightarrow E_n= E ,$$ such that quotient objects $A_i\cong E_i/E_{i-1}$ are semistable with decreasing phases, i.e. $\phi(A_i)>\phi(A_{i+1})$ for $i=1, 2, \ldots n.$

\item For a given norm $\| \cdot \|$ on $\k$, there exists a constant real number $C>0$ such that $$|Z(E)|<C\|[E]\|$$ for every semistable object in $E\in \mathcal A.$
\end{enumerate}
\end{dfn}  

\begin{dfn}
An object $E\in Ku(Y)$ is $\sigma$-semistable if $E[i]\in \mathcal A$ is semistable for some $i$.
\end{dfn}

The following two theorems are fundamental to us.

\begin{thm}\cite[theorem 1.2]{BLMS}
$Ku(Y)$ admits a Bridgeland stability condition $\sigma$.
\end{thm}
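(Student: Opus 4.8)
The plan is to obtain $\sigma$ not directly on $Ku(Y)$ but as the restriction of a \emph{weak} stability condition on the ambient category $D^b(Y)$, via the general machinery of inducing stability conditions on admissible subcategories. First I would build such a weak stability condition on $D^b(Y)$ by iterated tilting. Starting from the standard heart $\mathrm{Coh}(Y)$ with the slope $\mu_H(E)=H^{3}\cdot\mathrm{ch}_1(E)/(H^{4}\cdot\mathrm{ch}_0(E))$, tilt at a real parameter $\beta$ to obtain $\mathrm{Coh}^{\beta}(Y)$; on it form the tilted slope $\nu_{\alpha,\beta}$ built from $H^{2}\cdot\mathrm{ch}^{\beta}_2$ and $H^{3}\cdot\mathrm{ch}^{\beta}_1$ (with $\mathrm{ch}^{\beta}=e^{-\beta H}\mathrm{ch}$, $\alpha>0$) and tilt once more, arriving at a heart $\mathcal{A}=\mathcal{A}_{\alpha,\beta}$ together with a central charge $Z=Z_{\alpha,\beta}$ that form a weak stability condition $\sigma=(\mathcal{A},Z)$ on $D^b(Y)$; its kernel consists of the objects supported in dimension $\le 1$. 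The support property for this weak stability condition follows from the classical Bogomolov--Gieseker inequality refined by the numerical inequality produced at each tilt.

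Next I would invoke the inducing criterion: for the admissible subcategory $Ku(Y)=\langle\o_Y,\o_Y(1),\o_Y(2)\rangle^{\perp}$ and the weak stability condition $\sigma$ above, if the exceptional objects $\o_Y,\o_Y(1),\o_Y(2)$, after suitable shifts, lie in $\mathcal{A}$, are $\sigma$-stable of one and the same phase, and satisfy the relevant $\mathrm{Hom}$-vanishings, then $\mathcal{A}_{Ku}:=\mathcal{A}\cap Ku(Y)$ is the heart of a bounded t-structure on $Ku(Y)$, and $(\mathcal{A}_{Ku},\,Z|_{\k})$ is a genuine Bridgeland stability condition once one checks, in addition, that $Z$ does not vanish on any nonzero object of $\mathcal{A}_{Ku}$ and takes values in the union of the strict upper half-plane and the negative real axis there. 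I would verify the stability and phase coincidence of the $\o_Y(i)$, together with these positivity statements, from the explicit description of $\mathrm{Coh}^{\beta}(Y)$ for a concrete window of parameters $(\alpha,\beta)$, from Serre duality on $Y$, and from the fact that the Serre functor of $Ku(Y)$ is $(-)[2]$; the non-vanishing of $Z$ on $\mathcal{A}_{Ku}$ uses that an object of $Ku(Y)$ cannot be a shift of a sheaf supported in dimension $\le 1$. The support property for the induced condition then descends from the ambient one, since $\k$ is a finite-rank lattice on which the relevant quadratic form keeps the required signature.

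The hard part will be the verification of the inducing hypotheses, i.e.\ that $\mathcal{A}\cap Ku(Y)$ is genuinely a bounded heart: concretely, that for an arbitrary $E\in Ku(Y)$ the cohomology objects of $E$ with respect to the weak-stability heart $\mathcal{A}$ on $D^b(Y)$ again lie in $Ku(Y)$, equivalently that none of the line bundles $\o_Y(i)$ can occur as a sub- or quotient object in $\mathcal{A}$ of an object right-orthogonal to the full exceptional collection. This is precisely where the sharp numerics of the tilted slopes and the tilted Bogomolov--Gieseker-type inequalities are indispensable, and it is the technical core of the argument.
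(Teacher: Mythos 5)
This statement is imported verbatim from \cite{BLMS} and the paper offers no proof of it --- it is used as a black box --- so there is nothing internal to compare your sketch against. On its own terms, your outline does reproduce the actual strategy of \cite{BLMS}: construct weak stability conditions on $D^b(Y)$ by iterated tilting of $\mathrm{Coh}(Y)$ with the truncated twisted Chern characters, then induce a genuine stability condition on $Ku(Y) = \langle \mathcal O_Y, \mathcal O_Y(1), \mathcal O_Y(2)\rangle^{\perp}$ via their inducement criterion, with the generalized Bogomolov--Gieseker inequalities carrying the support property. Two small inaccuracies worth flagging: the hypotheses of the inducement criterion in \cite{BLMS} are not quite ``stable of the same phase plus Hom-vanishing'' but rather that the exceptional objects $E_i$ lie in the weak-stability heart while their Serre-functor images $S(E_i)$ lie in that heart shifted by $[1]$, together with non-vanishing of $Z$ on the intersection heart; and for cubic fourfolds one needs an additional rotation (a further tilt of the second heart at slope $0$) before the $\mathcal O_Y(i)[1]$ actually satisfy these hypotheses. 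As you say yourself, the genuine content is in verifying those hypotheses and the support property, which your sketch names but does not carry out; that is consistent with this being a cited external theorem rather than something the paper proves.
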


In the following, $\sigma$ will always be a stability condition on $Ku(Y)$ as constructed in \cite[proof of theorem 1.2]{BLMS}, and $\mathcal A$ always be the corresponding heart of t-structure. Let $M_\sigma(\lambda)$ be the moduli of $\sigma$-semistable objects in $\mathcal A$ with Mukai vector $\lambda \in \k.$ 

Let $L$ denote a line in $Y$, $C$ a generalized twisted cubic curve in $Y,$ and $I_L, I_C$ their ideal sheaves. $\mathbb R_{\o_Y}$ and $\mathbb L_{\o_Y}$ denote the right and left mutation with respect to the exceptional object $\o_Y.$ 
\begin{thm}[\cite{LLMS,LPZ}]\label{moduli} 

\begin{enumerate}[(a).]
    \item $M_\sigma(\lambda_1)$ is isomorphic to the variety of line $F(Y),$ parametrizing mutations of ideal sheaves of lines $F_L:= \mathbb L_{\o_Y}(I_L(1))[-1] = \text{ker}(H^0(I_L(1))\otimes \o_Y \twoheadrightarrow I_L(1)).$
    
    \item $M_\sigma(\lambda_1+\lambda_2)$ is isomorphic to $F(Y),$ parametrizing double mutations of ideal sheaves of lines $P_L:= \mathbb R_{\o_Y}(\mathbb L_{\o_Y}(I_L(1)))\otimes \o_Y(-1)[-1].$ In particular, it fits into a non-splitting extension $0\to \o_Y(-1)[1] \to P_L \to I_L \to 0.$
    
    \item Assume in addition that $Y$ does not contain any plane. $M_\sigma(2\lambda_1+\lambda_2)$ is isomorphic to the LLSvS eightfold $Z(Y).$
\end{enumerate}
\end{thm}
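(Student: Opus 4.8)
The plan is to treat the three identifications in parallel. In each case one exhibits a natural geometric family of objects of $\mathcal A$ of the prescribed Mukai vector, proves that its members are $\sigma$-stable, computes their self-extensions to pin down the local structure of the moduli space, and finally shows that the classifying morphism from the classical parameter space ($F(Y)$ in (a),(b); $Z(Y)$ in (c)) is an isomorphism. This synthesizes the analyses of \cite{BLMS}, \cite{LLMS} and \cite{LPZ}, which I would follow. A uniform input throughout is that the Serre functor on $Ku(Y)$ is $[2]$: for a $\sigma$-stable object $E$ of class $\lambda$ one then has $hom(E,E)=ext^2(E,E)=1$ and, since $\chi(E,E)=-(\lambda,\lambda)$, the identity $ext^1(E,E)=(\lambda,\lambda)+2$. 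As the $A_2$-lattice gives $(\lambda_1,\lambda_1)=(\lambda_1+\lambda_2,\lambda_1+\lambda_2)=2$ and $(2\lambda_1+\lambda_2,2\lambda_1+\lambda_2)=6$, the three moduli spaces are smooth, projective and holomorphic-symplectic of dimensions $4$, $4$ and $8$, matching $F(Y)$, $F(Y)$ and $Z(Y)$.

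For (a) and (b) I would first verify by a Chern-character and cohomology computation that $F_L=\mathbb{L}_{\o_Y}(I_L(1))[-1]$ and $P_L=\mathbb{R}_{\o_Y}(\mathbb{L}_{\o_Y}(I_L(1)))\otimes\o_Y(-1)[-1]$ lie in $Ku(Y)$ and in the heart $\mathcal A$, with classes $\lambda_1$ and $\lambda_1+\lambda_2$. The essential point is $\sigma$-stability. Here I would use the construction of $\sigma$ in \cite{BLMS} as the restriction of a double-tilted weak stability condition on $D^b(Y)$: one checks that $I_L(1)$ is (weakly) stable, that its mutation survives the tilts, and hence that $F_L$ is $\sigma$-stable; for $P_L$ one exploits the non-splitting triangle $\o_Y(-1)[1]\to P_L\to I_L$ together with stability of its factors. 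Alternatively, since $P_L=\mathbb{R}_{\o_Y}(F_L)\otimes\o_Y(-1)$ is the image of $F_L$ under a rotation autoequivalence of $Ku(Y)$ realizing the order-three symmetry of the $A_2$-lattice, stability in (b) can be deduced from (a) once one verifies this functor preserves $\sigma$ up to the relevant $\widetilde{GL}^+_2(\mathbb R)$-action. Constructing the universal objects $\{F_L\}$ and $\{P_L\}$ by relative mutation of the universal ideal sheaf over $F(Y)\times Y$ yields classifying morphisms $F(Y)\to M_\sigma(\lambda_1)$ and $F(Y)\to M_\sigma(\lambda_1+\lambda_2)$; one recovers $L$ from $F_L$ (resp. $P_L$) by the inverse mutation, so these maps are injective, and the deformation map is injective, hence by equality of dimensions the differential is everywhere an isomorphism. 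Thus the maps are \'etale, and being injective and proper with connected target they are isomorphisms.

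For (c), under the no-plane hypothesis I would use the LLSvS description of $Z(Y)$ via generalized twisted cubics: the $10$-dimensional smooth space $M_3(Y)$ admits the contractions $M_3(Y)\to Z'(Y)\to Z(Y)$, a $\mathbb P^2$-bundle followed by the blow-down of a copy of $Y$. To a twisted cubic $C\subset Y$ one attaches the $Ku(Y)$-projection of $I_C$ (a suitable mutation and twist of $I_C(2)$), an object of class $2\lambda_1+\lambda_2$; I would prove these are $\sigma$-stable and that cubics lying in a common linear system yield isomorphic objects, so that the classifying map factors through $Z'(Y)$ and then through the contraction to $Z(Y)$. The resulting morphism $Z(Y)\to \m$ is injective with everywhere-bijective differential, hence \'etale, hence (being injective, proper, with connected target) an isomorphism of the two smooth $8$-folds.

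The main obstacle throughout is the stability analysis: establishing $\sigma$-stability of $F_L$, $P_L$ and the twisted-cubic objects requires careful control of the \cite{BLMS} double tilt and of Bogomolov-type inequalities, and for (c) one must in addition classify \emph{all} $\sigma$-stable objects of class $2\lambda_1+\lambda_2$ and match the degenerate loci of the moduli space with the exceptional behaviour of the LLSvS contraction (the contracted $\mathbb P^2$'s and the copy of $Y$). Equally crucial, and used in every part, is the irreducibility, hence connectedness, of the Bridgeland moduli space, without which an injective \'etale map need not be surjective; this I would obtain from the general theory of moduli of stable objects in $Ku(Y)$ together with nonemptiness furnished by the explicit families above.
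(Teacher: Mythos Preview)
The paper does not supply its own proof of this theorem: it is quoted verbatim from \cite{LLMS} and \cite{LPZ} and stated without argument, serving only as background input for the rest of the paper. There is therefore nothing in the present paper to compare your proposal against.

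That said, your outline is a faithful high-level summary of the strategy actually carried out in those references (construct the objects $F_L$, $P_L$, and the Kuznetsov projections of twisted-cubic ideals; verify membership in $\mathcal A$ and $\sigma$-stability via the explicit \cite{BLMS} double tilt; use Mukai-type $Ext$ computations to match dimensions; and identify the classifying morphism with the classical space). The one point where your sketch is looser than the cited arguments is the surjectivity step: in practice \cite{LPZ} does not appeal to an abstract irreducibility result for the Bridgeland moduli space but rather classifies \emph{all} $\sigma$-semistable objects of the given class directly and shows each arises from the geometric construction, which is what actually establishes that the injective classifying map hits every point.
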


The following fact is shown in the proof of \cite[proposition 9.11]{BLMS}.
\begin{lemma}
Suppose that $F_L\in M_\sigma(\lambda_1)$ and $P_{L'}\in M_\sigma(\lambda_1+\lambda_2),$ then $\phi(F_L)<\phi(P_{L'})<\phi(F_L)+1.$
\end{lemma}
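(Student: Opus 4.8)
\emph{Proof proposal.} By \cref{moduli}, both $F_L$ and $P_{L'}$ are $\sigma$-semistable objects of the heart $\mathcal A$, so their phases lie in $(0,1]$; in particular $\phi(P_{L'})\le 1<1+\phi(F_L)$, so the right-hand inequality is automatic and the content of the lemma is the single inequality $\phi(F_L)<\phi(P_{L'})$. One could hope to deduce this abstractly: a nonzero morphism $F_L\to P_{L'}$ of $\sigma$-stable objects forces $\phi(F_L)\le\phi(P_{L'})$ (the image is a quotient of $F_L$ and a subobject of $P_{L'}$), hence $\phi(F_L)<\phi(P_{L'})$ because $[F_L]=\lambda_1\ne\lambda_1+\lambda_2=[P_{L'}]$ rules out equality of phases. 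But as the existence of such a morphism is not evident for arbitrary $L,L'$, I would instead argue numerically with central charges.

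Recall from \cite{BLMS} that $\sigma=(Z,\mathcal A)$ is produced by a double tilt of $D^b(Y)$ with parameters $(\alpha,\beta)$ in an explicit region, and that $Z$ is the restriction to $K_{num}(Ku(Y))$ of the standard double-tilt central charge, a fixed expression depending only on $\mathrm{ch}_{\le 2}(-)$ and the hyperplane class $H$. Since $\lambda_1,\lambda_2$ span the sublattice $A_2\subseteq K_{num}(Ku(Y))$, the homomorphism $Z$ is pinned down on it by $z_1:=Z(\lambda_1)$ and $z_2:=Z(\lambda_2)$, with $Z(\lambda_1+\lambda_2)=z_1+z_2$. I would compute $z_1$ and $z_2$ by substituting into this formula the Chern characters of the representatives $F_L$ and $P_L$ furnished by the mutation triangles of \cref{moduli}(a),(b) (equivalently, those of $i^*\o_L(H)$ and $i^*\o_L(2H)$).

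With $z_1$ and $z_2$ in hand, write $\phi(F_L)=\tfrac1\pi\arg z_1$ and $\phi(P_{L'})=\tfrac1\pi\arg(z_1+z_2)$. Since $F_L,P_{L'}\in\mathcal A$, both $z_1$ and $z_1+z_2$ have argument in $(0,\pi]$, and in this range $\arg z_1<\arg(z_1+z_2)$ is equivalent to $\mathrm{Im}\big(\overline{z_1}(z_1+z_2)\big)>0$, i.e. to $\mathrm{Im}(\overline{z_1}z_2)>0$. Thus the lemma reduces to checking this single real inequality for the admissible $(\alpha,\beta)$ of \cite{BLMS} — a direct substitution of the computed values of $z_1$ and $z_2$ — which is exactly the computation carried out in the proof of \cite[Proposition 9.11]{BLMS}.

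The step I expect to be the main obstacle is the bookkeeping: computing $\mathrm{ch}_{\le 2}$ of the complexes $F_L$ and $P_L$, which are objects of $D^b(Y)$ on a fourfold (not sheaves) defined through iterated mutations, and matching conventions and the parameter region with the specific BLMS central charge — the region being precisely the one for which \cref{moduli} places $F_L$ and $P_{L'}$ in $\mathcal A$ in the first place. Once $z_1$ and $z_2$ are determined, the inequality $\mathrm{Im}(\overline{z_1}z_2)>0$ is immediate.
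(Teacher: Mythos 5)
Your proposal is correct and ends up in the same place as the paper: the paper gives no argument of its own and simply cites the proof of \cite[proposition 9.11]{BLMS}, which is exactly where you locate the decisive computation. Your added reductions (the right-hand inequality being automatic from $\phi\in(0,1]$, and $\phi(F_L)<\phi(P_{L'})$ being equivalent to $\mathrm{Im}(\overline{z_1}z_2)>0$ since phases of semistable objects are determined by their classes) are sound elementary bookkeeping on top of the same citation.
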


\subsection{Constant families of t-structures on $Ku(Y)$.}\label{sec2.3} We would like to consider $\f, \p$ as families of objects in the heart $\mathcal A \subset Ku(Y)$. The foundation for this is a combination of \cite{Kuz} and \cite{AP}, \cite{Po}, which we quickly review here.

First, we recall the following special case of a theorem of Kuznetsov:

\begin{dfn}\cite{Kuz}
Let $D^{[a, b]}(Y):=\{F\in D(Y):\mathcal H^i(F)=0, \text{ for any } i\notin [a, b]\}.$ Let $D$ be a triangulated subcategory of $D(X)$ and $\Phi: D\to D(Y)$ be a triangulated functor. We say $\Phi$ has finite amplitude if $\Phi(D\cap D^{[p, q]}(X))\subset D^{[p+a, q+b]}(Y)$ for some finite integers $a, b,$ for all $p, q\in \mathbb Z.$ 
\end{dfn}

\begin{thm}\cite[theorem 5.6]{Kuz}\label{sod}
Let $Y$ be a smooth projective variety with a semiorthogonal decomposition of its derived category $D^b(Y)=\langle D_1, \ldots, D_m \rangle,$ such that the projection functors to $D_i$ have finite amplitude. Let $S$ be a scheme of finite type over $\mathbb C$, write $Y_S:=Y\times S$ and $f:Y_S\to S,$ $p: Y_S\to Y$ for the projections. Then the derived category of $Y_S$ decomposes $$D^b(Y_S)=\langle D_{1,S}, \ldots, D_{m,S} \rangle.$$ In particular, if $ i: T\subset S$ is either an open immersion or a smooth point, then the functors $Li^*: D^b(Y_S)\to D^b(Y_T)$ and $Ri_*:D^b(Y_T)\to D^b(Y_S)$ respect the decompositions. 
\end{thm}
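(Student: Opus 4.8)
Since the statement is \cite[Theorem~5.6]{Kuz} in full generality, the honest answer is to cite it; but here is the argument I would reconstruct, which in our situation (where three of the four components are generated by exceptional objects) is quite concrete. The plan is to produce, for each $i$, an $S$-linear endofunctor $\pi_{i,S}$ of $D^b(Y_S)$ that behaves like the projection onto the $i$-th component, arrange that the $\pi_{i,S}$ are mutually orthogonal idempotents filtering $\mathrm{id}_{D^b(Y_S)}$ in the original order, and then invoke the standard lemma: a filtration of the identity functor by such pieces yields a semiorthogonal decomposition $D^b(Y_S)=\langle D_{1,S},\dots,D_{m,S}\rangle$ with $D_{i,S}=\mathrm{im}\,\pi_{i,S}$, the latter being exactly the full subcategory of $D^b(Y_S)$ cut out by the relative $\mathrm{Hom}$-vanishing conditions against the other components that characterise $D_i$ inside $D^b(Y)$.

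The $\pi_{i,S}$ I would get by base-changing Fourier--Mukai kernels. Because $Y$ is smooth projective and the projection functors $D^b(Y)\to D_i\hookrightarrow D^b(Y)$ have finite amplitude, each is the Fourier--Mukai transform along a \emph{bounded} kernel $K_i\in D^b(Y\times Y)$, and $\mathcal{O}_\Delta$ carries a filtration $0=C_0\to C_1\to\cdots\to C_m=\mathcal{O}_\Delta$ whose graded pieces are the $K_i$ in the decomposition order (for the pieces $\langle\mathcal{O}_Y(j)\rangle$ the $K_j$ are the elementary kernels attached to the exceptional objects $\mathcal{O}_Y(j)$ and their duals, and the kernel for $Ku(Y)$ is what remains of $\mathcal{O}_\Delta$). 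Pulling each $K_i$ back along the projection $Y_S\times_S Y_S=Y\times Y\times S\to Y\times Y$ gives an $S$-linear kernel $K_{i,S}$, and I set $\pi_{i,S}$ to be the relative Fourier--Mukai transform with kernel $K_{i,S}$. Since $S\to\operatorname{Spec}\,\mathbb{C}$ is flat and the amplitudes are finite, this pullback commutes with the convolutions that assemble $\mathcal{O}_\Delta$ from the $C_j$; hence the $K_{i,S}$ filter the relative diagonal $\mathcal{O}_{\Delta_{Y_S/S}}$ with graded pieces $K_{i,S}$, which is exactly the input the filtration-of-the-identity lemma needs. The remaining verifications — idempotence of each $\pi_{i,S}$ and the vanishing $\pi_{i,S}\pi_{j,S}=0$ for $i\neq j$, equivalently semiorthogonality of the $D_{i,S}$ — I would reduce, via the projection formula and flat base change over $S$, to the corresponding statements on $Y\times Y$; in the exceptional pieces these amount to $R\Gamma(Y,\mathcal{O}_Y(j-i))=0$ for $j-i\in\{-1,-2\}$ and K\"unneth.

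For the final clause, let $i\colon T\hookrightarrow S$ be an open immersion or a smooth point, so that $Li^*$ preserves $D^b$ and carries $D^b(Y_S)$ to $D^b(Y_T)$, while $Ri_*$ goes the other way. Base change for the kernels $K_{i,S}$ gives canonical isomorphisms $Li^*\pi_{i,S}\simeq\pi_{i,T}Li^*$ and $Ri_*\pi_{i,T}\simeq\pi_{i,S}Ri_*$ — the hypothesis ``$T$ open or smooth point'' enters precisely to make these base-change maps isomorphisms with no residual Tor and to keep $Li^*$ bounded. Therefore $Li^*$ sends $D_{i,S}=\mathrm{im}\,\pi_{i,S}$ into $D_{i,T}$ and $Ri_*$ sends $D_{i,T}$ into $D_{i,S}$, i.e.\ both functors respect the decompositions.

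The step I expect to be the real obstacle is the compatibility claimed in the second paragraph: showing that passing to the pulled-back kernels genuinely commutes with the convolutions building $\mathcal{O}_\Delta$, so that one lands on the \emph{relative} diagonal with no spurious Tor terms and the $\pi_{i,S}$ come out as honest mutually orthogonal projections. This is where the finite-amplitude hypothesis (and flatness of $Y$) does the work and is the technical heart of \cite{Kuz}; everything else is formal or reduces to cohomology on $Y$.
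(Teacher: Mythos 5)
The paper does not prove this statement; it is quoted directly from \cite{Kuz} (specialized to the constant family $Y\times S\to S$), so there is no internal proof to compare yours against. Taken on its own terms, your sketch is a reasonable strategy, and for the actual situation in this paper it is essentially complete: with $D^b(Y)=\langle Ku(Y),\mathcal O_Y,\mathcal O_Y(1),\mathcal O_Y(2)\rangle$ the projections onto the exceptional pieces are visibly Fourier--Mukai functors with explicit kernels, the kernel for $Ku(Y)$ is the remaining graded piece of the filtration of $\mathcal O_\Delta$, and flat base change along $Y\times Y\times S\to Y\times Y$ does turn the pulled-back kernels into mutually orthogonal idempotents filtering the relative diagonal, whence the decomposition and the compatibility with $Li^*$ and $Ri_*$.

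The one place where you misplace the difficulty is the sentence asserting that smoothness and projectivity of $Y$ together with finite amplitude of the projection functors imply that each projection is a Fourier--Mukai transform along a bounded kernel. For a general semiorthogonal decomposition this is not a formal consequence of the hypotheses: kernel representability of the projection functors is itself one of the main theorems of \cite{Kuz}, and in that paper its proof \emph{uses} the base change theorem you are reconstructing, so invoking it as an input risks circularity. Kuznetsov's own proof of the base change theorem runs in the opposite direction and avoids kernels entirely: he defines $D_{i,S}$ as the triangulated subcategory generated by objects of the form $F\boxtimes G$ with $F\in D_i$ and $G\in D^b(S)$, proves semiorthogonality of these subcategories via the projection formula and faithfulness of the (flat) base change, and proves that they generate $D^b(Y_S)$ by a d\'evissage in which the finite-amplitude hypothesis guarantees boundedness of the successive projections. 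So you should either follow that route, or explicitly restrict your argument to the case at hand, where the four kernels are explicit (the three elementary kernels of the exceptional objects and the complementary piece of $\mathcal O_\Delta$) and representability is not an issue. Your treatment of the final clause about $Li^*$ and $Ri_*$ is correct in either approach, since an open immersion or a smooth point has finite Tor-dimension, so $Li^*$ preserves boundedness and the relevant base change maps are isomorphisms.
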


\begin{ex}
Let $Y$ be as above and suppose that $D^b(Y)=\langle Ku(Y), E_1, \ldots, E_m \rangle,$ where $\langle E_1, \ldots, E_m \rangle$ is an exceptional collection, and $Ku(Y):=\{F\in D^b(Y): R^\cdot Hom(E_i, F)=0, \text{ for all } i=1, \ldots, m\},$ then the projection functors are compositions of mutations. Since $Y$ is smooth and projective, mutations are of finite amplitude, and thus so are the projection functors. Thus \cref{sod} produces a triangulated subcategory $Ku(Y)_S$ of $D^b(Y_S),$ which we refer to as the family of Kuznetsov components over $S.$ 
\end{ex}

\begin{dfn}\cite{Po}
A t-structure $(D^{\leq 0}, D^{\geq 0})$ of a triangulated category $D$ is Noetherian if its heart $D^{\leq 0}\cap D^{\geq 0}$ is a Noetherian abelian category. It is close to Noetherian if there exist a Noetherian t-structure $(\hat D^{\leq 0}, \hat D^{\geq 0})$ of $D$ such that $\hat D^{\leq -1} \subset D^{\leq 0} \subset \hat D^{\leq 0}.$ 
\end{dfn}

\begin{ex}\cite[section 1.2]{Po}
Let $D$ be a triangulated category with $K_{num}(D)$ being finitely generated. Then given any Bridgeland stability condition $\sigma=(\mathcal A, Z)$ on $D$ with $Z:K_{num}(D)\otimes \mathbb C \to \mathbb C,$ the associated heart of t-structure $\mathcal A$ is close to Noetherian.
\end{ex}

\begin{dfn}\cite{AP}
A sheaf of hearts (or equivalently, t-structures) of $D^b(X)$ over a scheme $S$ is a functor $U\to \mathcal A_U,$ where $U\subset S$ is an open subset and $\mathcal A_U \subset D^b(X\times U)$ is the heart of a t-structure, such that the restriction functor $D^b(X\times U) \to D^b(X\times V)$ is t-exact, for any open immersion $V\subset U.$  
\end{dfn}

We would like to have the analog of the following result for $\mathcal A\subset Ku(Y):$

\begin{thm}\cite{AP, Po}\label{sheaft}
Suppose that $Y$ is a smooth projective variety and  $\mathcal A\subset D^b(Y)$ is a close to Noetherian and bounded heart. Let $S$ be a finite type scheme over $\mathbb C$. Then there is a sheaf of hearts $\mathcal A_S\subset D^b(Y_S),$ such that for any smooth point $i_s: s\hookrightarrow S,$ $Li_s^* \mathcal A_S \cong \mathcal A.$
\end{thm}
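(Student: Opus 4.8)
The plan is to follow the construction of Abramovich--Polishchuk \cite{AP} and Polishchuk \cite{Po}: build the relative t-structure on $D^b(Y_S)$ by hand from a fibrewise condition, prove it is a t-structure, and then read off its behaviour under restriction along open immersions and under pullback to smooth points. Being a sheaf of hearts is local on $S$, and the restriction functors will turn out to be t-exact for the t-structure built below, so I would first reduce to $S=\operatorname{Spec}R$ with $R$ a finitely generated (hence Noetherian) $\mathbb C$-algebra and glue over an affine cover at the end. One preliminary input is the base change of the heart to residue fields: for every field extension $K/\mathbb C$ one needs a close-to-Noetherian bounded heart $\mathcal A_K\subset D^b(Y_K)$ restricting to $\mathcal A$ over $\mathbb C$ and compatible with further extensions. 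Reducing to finitely generated $K$, writing $\operatorname{Spec}K$ as an inverse limit of smooth affine $\mathbb C$-varieties, and transporting $\mathcal A$ along the (flat, finite Tor-dimension) transition maps accomplishes this; it is exactly the close-to-Noetherian and boundedness hypotheses that guarantee the transported subcategories are again close-to-Noetherian bounded hearts, independent of the presentation. With the fibrewise hearts in hand, one sets
$$D^{\le 0}_{Y_S}:=\{E\in D^b(Y_S):Li_s^*E\in D^{\le 0}_{\mathcal A_{k(s)}}(Y_{k(s)})\text{ for every }s\in S\},$$
lets $D^{\ge 1}_{Y_S}$ be the right orthogonal of $D^{\le 0}_{Y_S}$, and proposes $\mathcal A_S:=D^{\le 0}_{Y_S}\cap D^{\ge 0}_{Y_S}$, which should coincide with $\{E\in D^b(Y_S):Li_s^*E\in\mathcal A_{k(s)}\text{ for all }s\}$. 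Closure of $D^{\le 0}_{Y_S}$ under $[1]$ and under extensions, and the orthogonality $\operatorname{Hom}(D^{\le 0}_{Y_S},D^{\ge 1}_{Y_S})=0$, are immediate from the definition.

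The real content is the existence, for every $E\in D^b(Y_S)$, of a triangle $\tau^{\le0}E\to E\to\tau^{\ge1}E$ with terms in $D^{\le0}_{Y_S}$ and $D^{\ge1}_{Y_S}$. I would construct these truncations by Noetherian induction on the support of $E$ in $S$: on a dense open $U\subseteq S$, the truncation of the generic fibre of $E$ spreads out to a truncation of $E|_{Y_U}$ whose two terms, after shrinking $U$ further, lie in $D^{\le0}_{Y_U}$ and $D^{\ge1}_{Y_U}$ --- this is where generic flatness of the relative $\mathcal A$-cohomology is used; over the closed complement $Z=S\setminus U$ the inductive hypothesis supplies a truncation of $E|_{Y_Z}$, and an octahedron splices the two into a truncation of $E$ over $Y_S$. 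Boundedness of $\mathcal A$ is what keeps every intermediate object in $D^b(Y_S)$ rather than merely in $D^-(Y_S)$, and controlling coherence and boundedness across the induction is precisely the point at which \cite{AP}'s Noetherian hypothesis must be relaxed: \cite{Po} runs the same induction using the sandwich $\hat D^{\le-1}\subseteq D^{\le0}\subseteq\hat D^{\le0}$ against a genuinely Noetherian t-structure to supply the needed finiteness. The hard part will be exactly this induction --- not any single step --- together with the verification that $\mathcal A_S$ is cut out by the two-sided fibrewise condition; the base change of the heart to residue fields is a secondary technical hurdle.

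Granting that $(D^{\le0}_{Y_S},D^{\ge1}_{Y_S})$ is a t-structure whose heart $\mathcal A_S$ consists of the $E$ with $Li_s^*E\in\mathcal A_{k(s)}$ for all $s$, the rest is short. For any $s$, $Li_s^*$ sends $\mathcal A_S$ into $\mathcal A_{k(s)}$ by definition, and since a triangle whose three terms lie in a heart is a short exact sequence, $Li_s^*$ restricts to an exact functor $\mathcal A_S\to\mathcal A_{k(s)}$; when $k(s)=\mathbb C$ --- for instance at a smooth closed point --- it is essentially surjective onto $\mathcal A$, because any object of $\mathcal A$ pulls back along the projection $Y_S\to Y$ to an object of $\mathcal A_S$. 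This is the assertion $Li_s^*\mathcal A_S\cong\mathcal A$ (for a general regular point $s$ one gets $\mathcal A_{k(s)}$ instead). Finally, for open immersions $V\subseteq U\subseteq S$ the restriction $D^b(Y_U)\to D^b(Y_V)$ preserves $D^{\le0}$ and $D^{\ge0}$ by the pointwise definition, so the assignment $U\mapsto\mathcal A_U$ is t-exact under restriction, i.e.\ a sheaf of hearts $\mathcal A_S\subset D^b(Y_S)$. For the analogue over $Ku(Y)$ announced after the statement, one glues $\mathcal A$ with the standard hearts on the exceptional blocks to a heart of $D^b(Y)$, applies the above, and intersects with the family $Ku(Y)_S$ provided by \cref{sod}; equivalently, the argument of \cite{Po} applies to admissible subcategories directly.
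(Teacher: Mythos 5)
The paper does not actually prove \cref{sheaft}; it is imported wholesale from \cite{AP} and \cite{Po}, so there is no internal argument to measure you against, only the cited references. Against those, your sketch names the right ingredients (relative t-structures, locality on $S$, the close-to-Noetherian sandwich, behaviour at smooth points) but reorganizes the construction around a step that does not work as stated. In \cite{AP} the aisle $D^{\le 0}_{Y_S}$ is \emph{not} defined by the fibrewise condition $Li_s^*E\in D^{\le 0}_{k(s)}$: for affine $S=\operatorname{Spec}R$ it is defined inside $D(\operatorname{Qcoh}(Y_R))$ as the cocomplete pre-aisle generated by objects $F\boxtimes M$ with $F$ in the aisle on $Y$ and $M$ an $R$-module, so that truncation functors exist automatically by the general theory of aisles; the substance of the proof is showing that this t-structure restricts to $D^b(\operatorname{Coh}(Y_R))$, is local on $S$, and is compatible with base change. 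The fibrewise description of the heart is a \emph{consequence}, and in \cite{AP} it is established only over smooth bases (their Corollary 3.3.3, which this paper quotes separately with exactly that smoothness hypothesis). Taking the pointwise condition as the definition over an arbitrary finite-type $S$ and then trying to manufacture truncations is therefore not a reconstruction of the cited proof but a harder alternative.

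The concrete gap is the splicing step of your Noetherian induction. For $j:U\hookrightarrow S$ open with closed complement $i:Z\hookrightarrow S$, the recollement on $D(Y_S)$ is between $D(Y_U)$ and the category $D_Z(Y_S)$ of complexes supported over $Z$, not $D^b(Y_Z)$: the inductive hypothesis gives you a truncation of $Li^*E$ in $D^b(Y_Z)$, which cannot simply be pushed forward and glued by an octahedron to a truncation of $E|_{Y_U}$, and $Rj_*$ of the open piece is neither coherent nor bounded, so the candidate triangle leaves $D^b(\operatorname{Coh}(Y_S))$. This is precisely the obstruction that forces \cite{AP} into the quasi-coherent category with a generator-defined aisle in the first place. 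Your treatment of the residue-field hearts, of t-exactness of restriction to opens, and of $Li_s^*\mathcal A_S\cong\mathcal A$ at smooth points is fine, but the central existence-of-truncations argument is not complete as written; to repair it you should either follow the generator construction of \cite{AP} (with the sandwich trick of \cite{Po} replacing Noetherianity) or supply the missing recollement argument for supports.
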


One way to obtain that is to use gluing of t-structures:
\begin{prop}\cite{BBD}\cite[lemma 3.1.1]{Po}  \label{gluet}
Suppose that $D$ is a triangulated category with a semiorthogonal decomposition $D=\langle D_1, \ldots, D_m \rangle,$ such that each inclusion functor $D_i \subset D$ admits left and right adjoints $pl_i, pr_i$, respectively. Then given a bounded t-structure $(D_i^{\leq 0}, D_i^{\geq 0})$ for each $D_i,$ there exists a bounded t-structure on $D$ given by $D^{\leq 0}:=\{ F\in D: pl_i(F)\in D^{\leq 0}_i, i=1, \ldots, m\}$ and $D^{\geq 0}:=\{ F\in D: pr_i(F)\in D^{\geq 0}_i, i=1, \ldots, m\}.$
\end{prop}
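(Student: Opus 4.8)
The statement is the classical gluing of t-structures along a semiorthogonal decomposition, so one legitimate option is to quote \cite{BBD} and \cite[lemma 3.1.1]{Po} directly; if one wants to reprove it, here is how I would proceed. First I would reduce to the case $m=2$ by induction on $m$, the case $m=1$ being trivial. Using associativity of semiorthogonal decompositions, write $D=\langle D_1, D'\rangle$ with $D'=\langle D_2,\dots,D_m\rangle$. Since $D_1\subset D$ is admissible, $D'$ is an admissible subcategory of $D$, so the inclusion $D'\hookrightarrow D$ has a left adjoint $q_l$ and a right adjoint $q_r$, and each inclusion $D_i\hookrightarrow D'$ ($i\ge 2$) inherits adjoints, namely the restrictions of $pl_i$ and $pr_i$. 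The key observation is that the adjoint of a composite is the composite of adjoints, so $pl_i=(pl_i|_{D'})\circ q_l$ and $pr_i=(pr_i|_{D'})\circ q_r$ for $i\ge 2$; hence the conditions $pl_iF\in D_i^{\le 0}$ for all $i$ (resp. $pr_iF\in D_i^{\ge 0}$ for all $i$) are equivalent to $pl_1F\in D_1^{\le 0}$ and $q_lF\in D'^{\le 0}$ (resp. $pr_1F\in D_1^{\ge 0}$ and $q_rF\in D'^{\ge 0}$), where $(D'^{\le 0},D'^{\ge 0})$ is the glued, and by the inductive hypothesis bounded, t-structure on $D'$. So the ``all at once'' pair on $D$ is exactly the one obtained by gluing $(D_1^{\le 0},D_1^{\ge 0})$ with $(D'^{\le 0},D'^{\ge 0})$, and it suffices to treat $m=2$.

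For $m=2$ the data is precisely a recollement of $D$ by $D_1$ and $D_2$, and I would verify the three axioms of a t-structure for the candidate pair $(D^{\le 0},D^{\ge 0})$. Stability under the appropriate shifts is immediate, since $pl_i,pr_i$ are exact and $D_i^{\le 0}$, $D_i^{\ge 0}$ are stable under $[1]$, resp. $[-1]$. For the vanishing $\operatorname{Hom}(X,Y)=0$ with $X\in D^{\le 0}$ and $Y\in D^{\ge 1}$, I would apply $\operatorname{Hom}(X,-)$ and $\operatorname{Hom}(-,Y)$ to the canonical semiorthogonal triangles of $X$ and of $Y$, and use the adjunctions to reduce everything to $\operatorname{Hom}$-groups internal to $D_1$ and $D_2$ (which vanish by the defining membership conditions) together with one mixed term killed by semiorthogonality. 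The substantive axiom is the existence, for every $F\in D$, of a triangle $\tau^{\le 0}F\to F\to \tau^{\ge 1}F\to \tau^{\le 0}F[1]$ with $\tau^{\le 0}F\in D^{\le 0}$ and $\tau^{\ge 1}F\in D^{\ge 1}$. Here I would follow the two-step construction of \cite[1.4.10]{BBD}: first truncate $F$ with respect to the quotient part $D_2$, then pass to the appropriate cone and truncate with respect to the sub part $D_1$, assembling the two truncation triangles by the octahedral axiom; one then checks that the pieces land in $D^{\le 0}$ and $D^{\ge 1}$ by applying $pl_i,pr_i$ and invoking the standard relations (writing $\iota_i$ for the inclusions: $pl_i\iota_i\cong\operatorname{id}\cong pr_i\iota_i$, together with the vanishing of the relevant cross-compositions $pl_i\iota_j$, $pr_i\iota_j$).

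Finally, boundedness is inherited for free: given $F\in D$, the objects $pl_1F,pr_1F$ lie in $D_1^{[a_1,b_1]}$ and $pl_2F,pr_2F$ in $D_2^{[a_2,b_2]}$ for suitable integers, the t-structures on the $D_i$ being bounded, so with $a=\min(a_1,a_2)$ and $b=\max(b_1,b_2)$ all defining conditions place $F$ in $D^{[a,b]}$; for general $m$ this follows the same way, or by the induction. I expect the main obstacle to be the construction of the truncation functors in the $m=2$ step: this is the technical heart of the gluing theorem, where the octahedral bookkeeping must be arranged so that the glued truncations satisfy both membership conditions simultaneously, and everything else in the argument is formal.
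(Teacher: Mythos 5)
The paper gives no proof of \cref{gluet} at all: it is stated as a direct quotation of \cite{BBD} and \cite[lemma 3.1.1]{Po}, so your first option---simply citing those sources---is exactly what the paper does, and your outline (induction on the number of factors, then the two-factor recollement gluing of \cite{BBD}) is the standard route one would take to reprove it. The reduction to $m=2$ via $D=\langle D_1,\langle D_2,\dots,D_m\rangle\rangle$ and the compatibility of adjoints is fine. One substantive warning, though: the orthogonality step is not as formal as your sketch suggests. Because the aisle is cut out by the \emph{left} adjoints $pl_i$ while the coaisle is cut out by the \emph{right} adjoints $pr_i$, applying $\mathrm{Hom}(-,Y)$ to the decomposition triangle $\iota_2 pr_2X\to X\to \iota_1 pl_1X$ of $X\in D^{\leq 0}$ kills the term $\mathrm{Hom}(\iota_1 pl_1X,Y)\cong \mathrm{Hom}_{D_1}(pl_1X,pr_1Y)$ but leaves $\mathrm{Hom}(\iota_2 pr_2X,Y)\cong \mathrm{Hom}_{D_2}(pr_2X,pr_2Y)$, and the membership condition on $X$ controls $pl_2X$, not $pr_2X$; the two differ by the gluing functor $pl_2\iota_1$ applied to $pl_1X$. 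Starting instead from the triangle of $Y$ runs into the mirror problem with $pl_1Y$ versus $pr_1Y$. So the ``one mixed term killed by semiorthogonality'' is precisely where the content of \cite[lemma 3.1.1]{Po} lies in this formulation (as opposed to the recollement formulation of \cite{BBD}, where the quotient factor uses the same functor $j^*$ in both the aisle and the coaisle, and the naive argument does close up); a complete writeup must reconcile the $pl$- and $pr$-descriptions rather than treat this as bookkeeping, and the same care is needed when checking that your octahedron-built truncations satisfy both membership conditions.
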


\begin{lemma}\label{ctn}
Notations and assumptions as in \cref{gluet}, in addition suppose that the functors $pr_i: D\to D_i$ have finite amplitude. If the t-structures $(D_i^{\leq 0}, D_i^{\geq 0})$ are all (close to) Noetherian, then there is a choice of shiftings for these t-structure, such that gluing the shifted t-structures give rise to a (close to) Noetherian one on $D$.
\end{lemma}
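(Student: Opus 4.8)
The plan is to induct on the length $m$ of the semiorthogonal decomposition, reducing to the two--piece case, and there to run the ``recollement of hearts'' argument of \cite{BBD}, using the finite--amplitude hypothesis precisely to pick a single finite shift that puts the two t-structures into a relative position for which the glued heart becomes an extension of one piece's heart by the other's.

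\emph{Reduction to $m=2$.} Write $D=\langle D_1, D'\rangle$ with $D'=\langle D_2,\dots,D_m\rangle$, and note that gluing $\tau_1,\dots,\tau_m$ along the full decomposition agrees with first gluing $\tau_2,\dots,\tau_m$ on $D'$ and then gluing the result with $\tau_1$ on $D$. The hypotheses of \cref{gluet} and of the lemma descend to $D'$: it is admissible in $D$, each $D_i$ ($i\ge 2$) is admissible in $D'$, and the projection functors for the coarser steps are built from the projection functors onto the $D_i$ (which are of finite amplitude, by hypothesis for $pr_i$, and by finite amplitude of mutations on a smooth projective variety for the $pl_i$), hence are again of finite amplitude. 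So it suffices to treat $D=\langle D_1, D_2\rangle$: given (close to) Noetherian $\tau_1,\tau_2$, produce shifts so that the glued t-structure is (close to) Noetherian.

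\emph{The Noetherian case.} Assume first that $\tau_1,\tau_2$ are Noetherian. The glued t-structure $\tau$ has a heart $\mathcal A$, and by \cite{BBD} there is an exact sequence of abelian categories $0\to\mathcal A_1\to\mathcal A\to\mathcal A_2\to 0$, i.e.\ $\mathcal A_1$ is identified with a Serre subcategory of $\mathcal A$ (via one of the SOD inclusions, which is then t-exact) with Serre quotient $\mathcal A_2$ — provided the inclusion $D_1\hookrightarrow D$ (and the projection $D\to D_2$) is t-exact for the chosen t-structures. One of the two halves of this t-exactness is automatic from semiorthogonality, while the other requires that the relevant projection functor carry $D_1^{\le 0}\cap D_1^{\ge 0}$ into $D_2^{\le 0}$; by the finite--amplitude hypothesis these functors send $D_1^{[-N,N]}$ into $D_2^{[-N',N']}$ with $N'$ finite and independent of $N$, so replacing $\tau_2$ by a fixed shift $\tau_2[k]$ secures the required inclusion. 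Given the Serre sequence, Noetherianity propagates: for an ascending chain $A_\bullet$ of subobjects of $A\in\mathcal A$, its image in $\mathcal A_2$ stabilizes, so past some index $A_n/A_N\in\mathcal A_1$; these quotients form an ascending chain of subobjects of the largest $\mathcal A_1$-subobject of $A/A_N$, which is a Noetherian object because $\mathcal A_1$ is, so the chain stabilizes. Hence $\mathcal A$ is Noetherian.

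\emph{The close--to--Noetherian case.} In general, pick Noetherian witnesses $\hat\tau_i$ on $D_i$ with $\hat D_i^{\le -1}\subset D_i^{\le 0}\subset\hat D_i^{\le 0}$; the shift $k$ above depends only on the amplitudes, so (after enlarging it by $1$, using $\hat D_i^{\le 0}\subset D_i^{\le 1}$) it also works for the $\hat\tau_i$. Glue $\hat\tau_1$ with $\hat\tau_2[k]$: by the previous step this is a Noetherian t-structure $\hat\tau$ on $D$, and I claim it is within distance one of the glued $\tau$ (with $\tau_2$ replaced by $\tau_2[k]$). This is formal from the conjunctive form of the gluing: $F\in\hat D^{\le -1}$ forces each projection of $F$ into the $(\le -1)$-part of the corresponding $\hat\tau_i$, hence into the $(\le 0)$-part of $\tau_i$, hence $F\in D^{\le 0}$; and $F\in D^{\le 0}$ puts each projection in the $(\le 0)$-part of $\tau_i\subset\hat\tau_i$, hence $F\in\hat D^{\le 0}$. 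So the glued t-structure is close to Noetherian, completing the induction.

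I expect the main obstacle to be the bookkeeping of the shifts in the Noetherian step: one must exhibit a \emph{single} finite shift $k$, depending only on the amplitudes of the projection functors and not on any particular ascending chain, that simultaneously repairs t-exactness of the inclusion $D_1\hookrightarrow D$ for $\tau$ and for the Noetherian witness, and — crucially once $m\ge 3$, where the gluings are nested — does not accumulate beyond distance one through the iteration. The finite--amplitude hypothesis is exactly the input that makes such a uniform $k$ available; the rest is formal manipulation of recollements and of ascending chains.
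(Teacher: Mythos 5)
Your strategy is genuinely different from the paper's and is workable, but one step is asserted rather than proved, and it is precisely the non-formal one. For comparison: the paper does not induct on $m$ and does not pass through the Serre sequence of hearts. It invokes \cite[lemma 3.1.2]{Po} to choose shifts making \emph{every} projection $pr_i$ simultaneously t-exact, so that $D^{[a,b]}=\{F\in D: pr_i(F)\in D_i^{[a,b]}\}$; then for an ascending chain $E_\bullet\subset F$ in the glued heart, each $pr_i(E_\bullet)$ is an ascending chain of subobjects of the fixed object $pr_i(F)$ in a Noetherian abelian category, hence stabilizes, and since an object of $D$ all of whose projections vanish is zero, $E_\bullet$ itself stabilizes. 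This handles all $m$ pieces at once and avoids recollement yoga entirely; your route buys a cleaner conceptual statement (Noetherianity is stable under extensions of abelian categories) at the cost of the adjoint-functor bookkeeping below.

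The soft spot is the phrase ``these quotients form an ascending chain of subobjects of the largest $\mathcal A_1$-subobject of $A/A_N$.'' A Serre subcategory of an abelian category need not admit maximal torsion subobjects, and without a common Noetherian ambient object the chain $A_n/A_N$ need not stabilize: an ascending chain of monomorphisms between objects of a Noetherian abelian category, with no fixed ambient object, can be strictly increasing forever (e.g.\ $k\hookrightarrow k^2\hookrightarrow\cdots$ in finite-dimensional vector spaces). What rescues you is the recollement structure, not the Serre-subcategory property alone: the right adjoint $i^!$ to the inclusion of the relevant piece is left t-exact for the glued t-structure, and $i_*\,{}^{p}H^0(i^!(A/A_N))\to A/A_N$ is a monomorphism realizing the maximal subobject lying in $\mathcal A_1$ (this is in \cite{BBD}), through which every $A_n/A_N$ factors. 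You must cite or prove this; it is exactly where the adjoint-functor hypotheses of \cref{gluet} enter your argument and is the one genuinely missing ingredient. With that supplied, the reduction to $m=2$, the finite-amplitude shift bookkeeping, and the sandwiching $\hat D^{\leq -1}\subset D^{\leq 0}\subset\hat D^{\leq 0}$ in the close-to-Noetherian case are all correct, and the last of these coincides with the paper's own treatment of that case.
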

\begin{proof}
This is an application of \cite[lemma 3.1.2]{Po}. Since $pr_i$ are assumed to be of finite amplitude, one can shift the given t-structures so that $pr_i|_{ D_j}: D_j \to D_i$ is right t-exact with respect to the shifted t-structures, for every $j>i.$ Then by \cite[lemma 3.1.2]{Po}, we have $$D^{[a, b]}=\{F\in D: pr_i(F) \in D_i^{[a, b]}\}.$$

Now we first assume that the hearts $\mathcal A_i:= D_i^{\leq 0}\cap D_i^{\geq 0}$ are Noetherian. Let $\mathcal A$ be the heart of the t-structure that comes from gluing $(D_i^{\leq 0}, D_i^{\geq 0})$ with appropriate shifts as above. For any $F\in \mathcal A,$ we have a decomposition of $F$

{\centering
\begin{tikzcd}[column sep=1em]
  0  \arrow{rr} && F_{m} \arrow{rr} \arrow{dl}  && F_{m-1} \arrow{dl} & .... &  F_{1} \arrow{rr}  && F \arrow{dl}\\
& pr_m(F) \arrow[ul] && pr_{m-1}(F)\arrow[ul]  && .... && pr_1(F)\arrow[ul]
\end{tikzcd}. 
\par} 
\noindent Given an ascending chain of subobjects $E_0 \subset E_1 \subset E_2 \subset \ldots \subset F$ in $\mathcal A,$ then $ pr_i(E_\cdot)$ is an ascending chain of subojects of  in a Noetherian abelian category and therefore stable. We see that $E_0 \subset E_1 \subset E_2 \subset \ldots \subset F$ is stable. Hence, $\mathcal A$ is Noetherian.

Now suppose that $(D_i^{\leq 0}, D_i^{\geq 0})$ are close to Noetherian. By definition, we have a Noetherian t-structure $(\hat D_i^{\leq 0}, \hat D_i^{\geq 0})$ of $D_i,$ such that $\hat D_i^{\leq -1} \subset D_i^{\leq 0} \subset \hat D_i^{\leq 0}.$ Gluing $(\hat D_i^{\leq 0}, \hat D_i^{\geq 0})$, up to appropriate shiftings, gives a Noetherian t-structure $(\hat D^{\leq 0}, \hat D^{\geq 0})$ of $D,$ with $\hat D^{\leq -1} \subset D^{\leq 0} \subset \hat D^{\leq 0}.$ Thus, $(D^{\leq 0}, D^{\geq 0})$ is close to Noetherian.
\end{proof}

\begin{cor}
Let $Y$ be a smooth cubic fourfold and $\mathcal A\subset Ku(Y)$ be the heart of the t-structure associated to a Bridgeland stability condition on $Ku(Y).$ Let $S$ be a quasi-projective variety. Then there exists a sheaf of hearts $\mathcal A_S \subset Ku(Y)_S,$ such that for any smooth point $i_s: s\hookrightarrow S,$ $Li_s^*\mathcal A_S \cong \mathcal A.$
\end{cor}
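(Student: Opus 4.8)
The plan is to bootstrap from the smooth projective case \cref{sheaft} by transporting a heart back and forth across the semiorthogonal decomposition $D^b(Y)=\langle Ku(Y),\o_Y,\o_Y(1),\o_Y(2)\rangle$. Namely: first extend $\mathcal A$ to a bounded, close-to-Noetherian heart $\mathcal B$ on all of $D^b(Y)$ by gluing along this decomposition; then apply \cref{sheaft} to the pair $(Y,\mathcal B)$ to obtain a sheaf of hearts $\mathcal B_S\subset D^b(Y_S)$; and finally cut $\mathcal B_S$ down to the family Kuznetsov component $Ku(Y)_S$. (One could alternatively invoke the versions of \cite{AP,Po} that apply directly to admissible subcategories, but the route below uses only the statements recalled above.)

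For the first step, note that $\k$ is a subgroup of the finitely generated abelian group $\K$, hence finitely generated, so by \cite[section 1.2]{Po} the Bridgeland heart $\mathcal A$ is close to Noetherian; the standard $t$-structure on each exceptional block $\langle\o_Y(i)\rangle\simeq D^b(\mathrm{Vect}_{\mathbb C})$ is Noetherian; and the projection functors of the decomposition are compositions of mutations, hence of finite amplitude (cf. the example following \cref{sod}). \cref{gluet} together with \cref{ctn} then yields, after a suitable choice of shifts, a bounded, close-to-Noetherian heart $\mathcal B\subset D^b(Y)$ whose restriction along the (by construction $t$-exact) inclusion $Ku(Y)\hookrightarrow D^b(Y)$ equals $\mathcal A$, possibly after a harmless global shift. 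Applying \cref{sheaft} to the smooth projective variety $Y$ and the heart $\mathcal B$ — legitimate since the quasi-projective variety $S$ is of finite type over $\mathbb C$ — produces a sheaf of hearts $\mathcal B_S\subset D^b(Y_S)$ with $Li_s^*\mathcal B_S\cong\mathcal B$ at every smooth point $i_s\colon s\hookrightarrow S$.

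It remains to descend $\mathcal B_S$ to $Ku(Y)_S$. For each open $U\subseteq S$, \cref{sod} provides a semiorthogonal decomposition $D^b(Y_U)=\langle Ku(Y)_U,E_{1,U},E_{2,U},E_{3,U}\rangle$ (with $E_{i,U}$ generated by $\o_Y(i-1)\boxtimes\o_U$, hence $\simeq D^b(U)$), compatible with shrinking $U$ and with $Li_s^*$ at smooth points, and whose relative projection functors again have finite amplitude. I would set $\mathcal A_U:=\mathcal B_U\cap Ku(Y)_U$. Granting that $\mathcal B_U$ is glued, in the sense of \cref{gluet}, from $\mathcal A_U$ and the standard sheaves of hearts on the blocks $E_{i,U}\simeq D^b(U)$, it follows formally that $\mathcal A_U$ is the heart of a bounded $t$-structure on $Ku(Y)_U$, that $U\mapsto\mathcal A_U$ is a sheaf of hearts (the inclusions $Ku(Y)_U\hookrightarrow D^b(Y_U)$, and hence the restriction functors, are $t$-exact for the glued structures), and that $Li_s^*\mathcal A_S=Li_s^*\mathcal B_S\cap Ku(Y)\cong\mathcal B\cap Ku(Y)=\mathcal A$ for smooth $i_s$, since $Li_s^*$ respects the decomposition.

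The main obstacle is exactly this last "granting": showing that the Abramovich--Polishchuk sheaf of hearts $\mathcal B_S$ is genuinely compatible, over every open $U$, with the relative decomposition — equivalently, that the relative projection functors $pr_i^U,pl_i^U$ are $t$-exact for $\mathcal B_U$ up to the shifts fixed in the first step. I expect this to follow by combining: the compatibility in \cref{sod} of $\langle D_{i,S}\rangle$ with restriction and with $Li_s^*$; the finite-amplitude bound, which via \cite[lemma 3.1.2]{Po} gives a description of $\mathcal B_U$ in terms of bounded $t$-windows of the relative projections; and the close-to-Noetherian property established above, which controls $\mathcal B_S$ through its restrictions to closed points. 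Once this glued description of $\mathcal B_S$ is secured, every remaining assertion is formal.
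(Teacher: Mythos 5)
Your proposal follows essentially the same route as the paper: glue $\mathcal A$ with hearts on the exceptional blocks via \cref{ctn} to get a close-to-Noetherian heart on $D^b(Y)$, apply \cref{sheaft} to obtain a sheaf of hearts on $D^b(Y_S)$, and intersect with $Ku(Y)_S$, using the base-change compatibility of \cref{sod} to check the sheaf property and the restriction at smooth points. The compatibility issue you flag as the ``main obstacle'' is exactly the point the paper also resolves by appeal to \cref{sod}, so your argument is correct and matches the paper's.
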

\begin{proof}
Recall that $D^b(Y)=\langle Ku(Y), \mathcal O_Y, \mathcal O_Y(1), \mathcal O_Y(2)\rangle.$ Note that the triangulated subcategory $\langle \o_Y(i) \rangle \cong D^b(pt).$ Thus we can glue $\mathcal A$ with a choice of hearts of t-structures on $\langle \o_Y(i) \rangle$ to get a heart $ \mathcal C \subset D^b(Y)$ that is close to Noetherian, by \cref{ctn}. Then \cref{sheaft} produces a sheaf of t-structures $  \mathcal C_S$ of $D^b(Y_S).$ Consider $\mathcal A_S:=\mathcal C_S \cap Ku(Y)_S,$ it defines a bounded t-structure of $Ku(Y)_S.$ Moreover, since the semiorthogonal decompositions are compatible with base change as in \cref{sod}, we see that $\mathcal A_S$ is indeed a sheaf of hearts and $Li_s^*\mathcal A_S \cong \mathcal A$ for any smooth point $i_s: s\hookrightarrow S.$ 
\end{proof}

\begin{dfn}\cite[definition 3.3.1]{AP}
Let $S$ be a scheme of finite type over $\mathbb C.$ A family of objects in the heart $\mathcal A \subset Ku(Y)$ over $S$ is an object $F\in Ku(Y)_S,$ such that for every closed point $i_s: s\in S$ one has $Li_s^*F \in \mathcal A.$  
\end{dfn}

\begin{prop}\cite[corollary 3.3.3]{AP}
Let $S$ be a smooth quasi-projective variety, and $E$ be a family of objects in the heart $\mathcal A$ over $S,$ then $E \in \mathcal A_S.$
\end{prop}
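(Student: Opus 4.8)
The plan is to reduce the statement to a cohomological claim about the $\mathcal A_S$-cohomology objects of $E$ and then detect their (non)vanishing by base change to points of $S$. Since $\mathcal A_S$ is a sheaf of hearts, membership $E\in\mathcal A_S$ may be checked locally on $S$, so one may assume $S$ smooth, affine and connected of dimension $d$. Write $H^i:=\mathcal H^i_{\mathcal A_S}(E)$ for its $\mathcal A_S$-cohomology objects, of which only finitely many are nonzero; the task is to show $H^i=0$ for $i\neq 0$. I will use the following, all contained in or modelled on \cite{AP,Po} together with \cref{sheaft}: (i) for $z\in S$ with $c:=\dim\mathcal O_{S,z}$, derived restriction $Li_z^*$ is right t-exact from $(D^b(Y_S),\mathcal A_S)$ to $(D^b(Y_{k(z)}),\mathcal A_{k(z)})$ and has cohomological amplitude inside $[-c,0]$ for these t-structures; (ii) a Nakayama property: if $0\neq G\in\mathcal A_S$ then $\mathcal H^0_{\mathcal A}(Li_z^*G)\neq 0$ for all $z$ in a dense subset of $\mathrm{Supp}(G)$; (iii) the hyper-cohomology spectral sequence $E_2^{p,q}=\mathcal H^p_{\mathcal A_{k(z)}}(Li_z^*H^q)\Rightarrow\mathcal H^{p+q}_{\mathcal A_{k(z)}}(Li_z^*E)$, concentrated in $p\in[-c,0]$ by (i). I also use that the hypothesis, imposed a priori at closed points, in fact holds at every $z\in S$: the locus $\{z\in S: Li_z^*E\in\mathcal A_{k(z)}\}$ is constructible and contains all closed points of the Jacobson scheme $S$, hence is everything.

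Vanishing of the top cohomology is immediate. Set $i_0:=\max\{i:H^i\neq 0\}$ and take a closed point $z=s$. In the spectral sequence (iii) the corner term $E_2^{0,i_0}=\mathcal H^0_{\mathcal A}(Li_s^*H^{i_0})$ carries no nonzero differential: outgoing differentials land in columns $p>0$, and incoming ones issue from $\mathcal H^{-r}_{\mathcal A}(Li_s^*H^{i_0+r-1})=0$ because $i_0+r-1>i_0$. Hence $\mathcal H^0_{\mathcal A}(Li_s^*H^{i_0})$ is a subquotient of $\mathcal H^{i_0}_{\mathcal A}(Li_s^*E)$. Choosing $s$ with $\mathcal H^0_{\mathcal A}(Li_s^*H^{i_0})\neq 0$ by (ii), and recalling that $Li_s^*E\in\mathcal A$ forces $\mathcal H^{i_0}_{\mathcal A}(Li_s^*E)=0$ unless $i_0=0$, we conclude $i_0\le 0$; that is, $E\in\mathcal A_S^{\le 0}$.

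Vanishing of the negative cohomology is the crux: right t-exactness of $Li_z^*$ is no longer enough, and the amplitude $[-c,0]$ must be made to detect the bottom. Suppose $i_1:=\min\{i:H^i\neq 0\}<0$, let $z$ be the generic point of a component of maximal dimension of $\mathrm{Supp}(H^{i_1})$, and put $c:=\dim\mathcal O_{S,z}$. Then $M:=(H^{i_1})_z$ is a nonzero object of the (close to) Noetherian heart over $\mathrm{Spec}\,\mathcal O_{S,z}$ supported at the closed point, and I claim such an $M$ satisfies $\mathcal H^{-c}_{\mathcal A_{k(z)}}(Li_z^*M)\neq 0$ — the statement "depth $0$ implies projective dimension $c$". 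This I would establish by induction on $c$: when $c\geq 1$, choose a regular parameter $t\in\mathcal O_{S,z}$; since $M$ is supported at the closed point and the heart is (close to) Noetherian, $t$ acts nilpotently on $M$, so $K:=\ker(t\colon M\to M)$ is nonzero, it equals $\mathcal H^{-1}$ of the cone of $t$, and it is an object of the heart over $\mathrm{Spec}\,\mathcal O_{S,z}/t$ (regular local of dimension $c-1$) supported at the closed point; applying the inductive hypothesis to $K$ and feeding it into the spectral sequence for the two-step base change $Li_z^*M\cong(M\otimes^L\mathcal O_{S,z}/t)\otimes^L k(z)$, whose extreme corner $\mathcal H^{-(c-1)}_{\mathcal A_{k(z)}}(Li_z^*K)$ survives, gives the claim. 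Granting it, $E_2^{-c,i_1}=\mathcal H^{-c}_{\mathcal A_{k(z)}}(Li_z^*H^{i_1})\neq 0$ is the extreme corner of the spectral sequence (iii) for $Li_z^*E$; no differential can meet column $p=-c$, so it survives and $\mathcal H^{i_1-c}_{\mathcal A_{k(z)}}(Li_z^*E)\neq 0$ with $i_1-c<0$ — contradicting $Li_z^*E\in\mathcal A_{k(z)}$. Therefore $H^i=0$ for $i<0$ as well, and $E\in\mathcal A_S$.

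The main obstacle is the third step, and more precisely the structural inputs it rests on: that derived restriction to a point of the smooth base is right t-exact of the correct amplitude for the sheaf of hearts (input (i)), the Nakayama property (ii), and the fact that an object of the Noetherian heart supported at a point is annihilated by a power of a parameter. These are exactly the technical substance of the theory of sheaves of hearts in \cite{AP} (equivalently, of its associated support/flatness theory), while everything else — the spectral-sequence bookkeeping and truncation triangles — is formal homological algebra.
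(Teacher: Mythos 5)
The paper does not prove this proposition at all: it is quoted verbatim as \cite[corollary 3.3.3]{AP}, so there is no internal argument to compare against. What you have written is, in substance, a reconstruction of the proof in the cited reference, and the overall strategy is sound: reduce to $S$ affine, bound the $\mathcal A_S$-cohomology of $E$ from above by Nakayama at closed points together with right t-exactness of $Li_s^*$, and bound it from below by localizing at the generic point of a component of the support of the lowest cohomology object and running the Koszul/regular-parameter induction (``supported at the closed point of a $c$-dimensional regular local ring forces a nonzero $\mathcal H^{-c}$ after $\otimes^L k(z)$''), which contradicts $Li_z^*E$ lying in the heart. The spectral-sequence bookkeeping in both halves is correct, and the induction on $c$ via $K=\ker(t)$ viewed over $\mathcal O_{S,z}/t$ is exactly the right mechanism.

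Two caveats. First, the argument is only as strong as the structural inputs (i)--(iii) you quote: amplitude $[-c,0]$ of derived restriction for the sheaf of hearts, the Nakayama property, closedness of supports, nilpotence of a parameter on an object supported at the closed point, and compatibility of the hearts with the closed immersion $\mathrm{Spec}\,\mathcal O_{S,z}/t\hookrightarrow \mathrm{Spec}\,\mathcal O_{S,z}$. These are genuine theorems (they constitute most of \cite{AP} Section 3 and of \cite{Po}), so the proof is honest but not self-contained; that is acceptable here since the paper itself delegates the whole statement to \cite{AP}. Second, the one step that deserves more care is the promotion of the hypothesis from closed points to arbitrary scheme points $z$: you invoke constructibility of the locus $\{z: Li_z^*E\in\mathcal A_{k(z)}\}$, but that constructibility is itself a semicontinuity statement of comparable depth to the corollary and should either be cited precisely or replaced by the more elementary generic-flatness route (relate $\mathcal H^j(Li_z^*E)$ at the generic point $z$ of an integral subvariety to $\mathcal H^j(Li_s^*E)$ at general closed points $s$ of that subvariety). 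Also two small points: the Nakayama input should be stated so that it produces a \emph{closed} point with $\mathcal H^0(Li_s^*H^{i_0})\neq 0$ (a dense subset of the support need not a priori contain one unless it is constructible), and ``maximal dimension'' is not needed in the choice of component --- the generic point of any irreducible component of $\mathrm{Supp}(H^{i_1})$ already localizes $H^{i_1}$ to an object supported at the closed point of $\mathcal O_{S,z}$.
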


\begin{ex}
Let $S$ be the moduli space $M_\sigma(\lambda_1)$ (or $M_\sigma(\lambda_1+\lambda_2)$), there exists a universal family $\mathcal F$ (resp. $\p$) over $Y_S.$ For any $s\in S,$ we have $\f_s \text{ (resp. } \p_s \text{)} \in \mathcal A \subset Ku(Y)$ in the heart associated to a Bridgeland stability condtion constructed in \cite{BLMS}, thus $\f$ (resp. $\p$) $\in \mathcal A_S.$ 
\end{ex}

\begin{dfn}
Let $S$ be a scheme of finite type, and $\mathcal A$ be the heart of a t-structure on $Ku(Y).$ Fix a family of objects $\e$ in the heart $\mathcal A$ over $S.$ A family of quotients in $\mathcal A$ of $\e$ over $S$ is a morphism $\e \to \p,$ whose restriction $\e_s \to \p_s$ to every closed point $s\in S$ is a surjection in $\mathcal A.$ This defines a relative quot functor \begin{equation}\label{Quot}
Quot_{\mathcal A, S}(\e, \lambda): (T\xrightarrow{g} S) \longrightarrow \left\{ 
  \begin{aligned}
  &\text{families of quotients of } \e_T \\ 
  &\text{in } \mathcal A  \text{ and of class } \lambda \text{ over } T
  \end{aligned}
\right\}.
\end{equation}
  
\end{dfn}

\section{Families of extensions} In this section, we review the description of families of extensions in \cite{lange}, then adapt it slightly for our study of the Voisin map in next section.

Throughout this section, let $X\to S$ be a projective and flat morphism between noetherian schemes, $\mathcal C\subset D^b(X)$ be a sheaf of heart over $S,$ $\f$ and $\p$ be two families of objects in $\mathcal C$. Given $\eta \in R^1Hom(\p, \f)$ that corresponds to an extension in $\mathcal C:$
\begin{equation}\label{ext}
    0\to \f \to
 \e \to\p \to 0,
\end{equation}
use  $\eta(s) \in R^1Hom_{X_s}(\p_s, \f_s)$ to denote the extension class represented by $$0\to \f_s \to \e_s \to \p_s \to 0,$$ the restriction of \cref{ext} to $s.$

\begin{dfn}\cite[definition 2.1]{lange}\label{foe}
\textit{A family of extensions of $\p$ by $\f$ over $S$} is a collection $\{\eta_s \in R^1Hom_{X_s}(\p_s, \f_s)\}_{s\in S}$ such that there exists an open cover $(U_i)_{i\in I}$ of $S$ with $\eta_i\in R^1Hom_{f^{-1}(U_i)}(\p _{f^{-1}(U_i)}, \f _{f^{-1}(U_i)})$ for each $i\in I,$ satisfying $\eta_i(s) = \eta_s$ for all $s \in U_i.$ 
\end{dfn}

As in \cite{lange}, we will be considering two functors from the category of noetherian schemes over $S$ 
to the category of sets:
\begin{equation}\label{fun1}
(g: T\rightarrow S) \longrightarrow \left\{ 
  \begin{aligned}
  &\text{families of extensions } \\ 
  &\text{of}\ \p_T\ \text{by}\ \f_T\ \text{over}\ T
  \end{aligned}
\right\},
\end{equation}
and 
\begin{equation}\label{fun2}
(g: T\rightarrow S) \longrightarrow \left\{ 
  \begin{aligned}
  &\text{families of nonsplitting exten- } \\ 
  &\text{sions of}\ \p_T\ \text{by}\ \f_T\ \text{over}\ T
  \end{aligned}
\right\}\Big{/}H^0(T,\mathcal O_T^*).
\end{equation}
\noindent In particular, the representability of the second functor will be crucial for us to resolve the Voisin map via extensions. The following definition is the key to this question:

\begin{dfn}\cite[section 1]{lange}
 Define $$R\hom_f(\p, \f):= Rf_*R\hom(\p,\f)\ , \ \ext^i(\p,\f):= \mathcal H^i(R\hom_f(\p,\f)).$$ 
\end{dfn}

\begin{rmk}\label{1strmk}
Suppose in addition that $S$ is affine, then $\ext^i(\p, \f) \cong \widetilde{R^iHom(\p, \f)}.$ Therefore, 
$\ext^i(\p, \f)$ is the sheaf on $S$ associated to the presheaf $$U \to R^iHom_{f^{-1}(U)}(\p _{f^{-1}(U)}, \f _{f^{-1}(U)}).$$

    
    
    

\end{rmk}

\begin{lemma}\cite[corollary 1.2]{lange}\label{keylemma}
\begin{enumerate}[(a).]
    \item $R^\dot \hom_f(\p, \f)$ is quasi-isomorphic to a locally free complex $\mathcal W_\dot.$
    \item Given a Cartesian diagram, where $T$ is a Noetherian scheme, {\center
\begin{tikzcd}[column sep=tiny]
 X_T \arrow[rr,"g"] \arrow[d, "f_T"] & & X  \arrow[d, "f"]\\
T \arrow[rr, "g"] & & S,
      \end{tikzcd}\par} and any sheaf $\mathcal G$ on $T,$  we have $\mathcal H^i(\mathcal W_\dot \otimes \mathcal G)\cong \mathcal Ext^i_{f_T}(Lg^*\p,\f\otimes^L \mathcal G).$ 
\end{enumerate}
\end{lemma}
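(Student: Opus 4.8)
The plan is to read the statement as the relative-$\mathcal{E}xt$ incarnation of Grothendieck's cohomology-and-base-change theorem and to reduce it to two inputs. (i) The complex $R\hom(\p,\f)$ is perfect relative to $S$ (``$f$-perfect''): both $\p$ and $\f$ are families of objects in the \emph{bounded} sheaf of hearts $\mathcal C$ (see \cref{sec2.3}), so each of their derived fibres $Li_s^*\p$, $Li_s^*\f$ has cohomology confined to one fixed window of degrees, and, $f$ being of finite type, the fibrewise Tor-amplitude criterion upgrades this to: $\p$ and $\f$ have bounded Tor-amplitude over $S$. Since $f$ is moreover flat and locally of finite presentation, being $f$-perfect is the same as being perfect on $X$, so $\p$ is perfect and $R\hom(\p,\f)\simeq\p^\vee\otimes^L\f$ — the derived tensor of the perfect complex $\p^\vee:=R\hom(\p,\o_X)$ with the $f$-perfect $\f$ — is again $f$-perfect. (ii) The standard theorem that, for $f$ proper of finite presentation and $E$ an $f$-perfect complex on $X$, the complex $Rf_*E$ is perfect on $S$ and its formation commutes with arbitrary base change, i.e.\ $Lg^*Rf_*E\to Rf_{T*}Lg^*E$ is an isomorphism for every Cartesian square as in the statement (we write $g$ also for the induced map $X_T\to X$); see SGA~6, or, for the classical coherent formulation used in \cite{lange}, Grothendieck's construction via a finite locally free resolution $\mathcal L_\dot\to\p$ and the base-change complex attached to the bounded-below complex $\hom^\dot(\mathcal L_\dot,\f)$ of $S$-flat coherent sheaves.

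Granting these, part (a) is immediate: taking $E=R\hom(\p,\f)$ in (ii) shows that $R\hom_f(\p,\f)=Rf_*R\hom(\p,\f)$ is a perfect complex on $S$; on a Noetherian scheme a perfect complex is Zariski-locally quasi-isomorphic to a bounded complex of finite locally free sheaves, and globally so when $S$ is quasi-projective (the resolution property), and this complex is the required $\mathcal W_\dot$.

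For part (b): since the $\mathcal W_i$ are locally free, $g^*\mathcal W_\dot\otimes_{\o_T}\mathcal G$ represents $Lg^*\mathcal W_\dot\otimes^L\mathcal G\simeq Lg^*Rf_*R\hom(\p,\f)\otimes^L\mathcal G$. Base change (ii) rewrites this as $Rf_{T*}\bigl(Lg^*R\hom(\p,\f)\bigr)\otimes^L\mathcal G\simeq Rf_{T*}R\hom(Lg^*\p,Lg^*\f)\otimes^L\mathcal G$, where perfectness of $\p$ lets $Lg^*$ pass through $R\hom(\p,-)$; the projection formula turns it into $Rf_{T*}\bigl(R\hom(Lg^*\p,Lg^*\f)\otimes^L Lf_T^*\mathcal G\bigr)$, and perfectness of $\p$ once more absorbs the factor $Lf_T^*\mathcal G$ into the second argument, giving $Rf_{T*}R\hom\bigl(Lg^*\p,\,Lg^*\f\otimes^L Lf_T^*\mathcal G\bigr)$. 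Passing to $\mathcal H^i$ yields $\mathcal H^i(\mathcal W_\dot\otimes\mathcal G)\cong\mathcal{E}xt^i_{f_T}(Lg^*\p,\,\f\otimes^L\mathcal G)$, reading $\f\otimes^L\mathcal G$ as $Lg^*\f\otimes^L Lf_T^*\mathcal G$, which is the assertion.

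The only step that is not pure formalism is point (i): that $\p$ and $\f$ are relatively perfect, i.e.\ that families of objects in a heart behave like flat families over $S$. That is exactly where the hypotheses of \cref{sec2.3} (that $\mathcal C$ is a bounded, close-to-Noetherian sheaf of hearts, so derived fibres stay in a fixed bounded window) enter, via the fibrewise Tor-amplitude criterion; I expect this — together with, when $S$ is not quasi-projective, the passage from a Zariski-local $\mathcal W_\dot$ to a global one — to be the only points requiring care, as everything downstream is routine manipulation of base change, the projection formula, and dualizability of $\p$.
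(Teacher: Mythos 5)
Your argument is essentially correct, but for part (a) it takes a genuinely different route from the paper. The paper follows Lange's classical construction directly: it takes a bounded injective replacement $\mathcal I_\dot$ of $\f$ and a locally free replacement of $\p$ whose terms are sufficiently negative powers of an $f$-ample line bundle, so that each $f_*\hom(L^{-n},\mathcal I_k)$ is locally free, and then defines $\mathcal W_\dot$ as the total complex of the resulting double complex of locally free sheaves. You instead invoke the general machinery of relatively perfect complexes: $\p,\f$ are $S$-perfect by the fibrewise Tor-amplitude criterion (this is where the bounded heart enters), hence so is $R\hom(\p,\f)$, hence $Rf_*$ of it is perfect on $S$ by the SGA~6 pushforward theorem, and the resolution property on the quasi-projective $S$ gives a global strictly locally free model. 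Both work; the paper's construction is self-contained and mirrors the cited source, while yours is cleaner conceptually but imports heavier general theorems and requires you to actually verify relative perfection, which the paper gets for free because $X=\M\times Y$ is smooth so every object of $D^b(\mathrm{Coh})$ is perfect. For part (b) your chain (locally free terms compute the derived tensor, Tor-independent base change, $Lg^*$ through $R\hom(\p,-)$ by perfection of $\p$, projection formula, absorb $Lf_T^*\mathcal G$) is exactly the paper's argument with the steps made explicit, which is a genuine improvement since the paper only gestures at ``a base change theorem.''

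One step is stated imprecisely: you claim that for $f$ flat and locally of finite presentation, $f$-perfect is the same as perfect on $X$. Only the implication ``perfect $\Rightarrow$ $f$-perfect'' holds at that level of generality; the converse requires $f$ smooth (or $X$ regular). This is harmless here because $f$ is the projection $\M\times Y\to\M$ with $Y$ smooth projective, so $f$ is smooth and, in fact, $X$ is regular; but you should either invoke smoothness of $f$ or simply note that $X$ is a smooth variety so $\p$ is automatically perfect, which short-circuits most of your point (i).
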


\begin{proof}
\begin{enumerate}[(a).]
    \item Let $\mathcal I_\dot$ be an injective replacement of $\f$ that has finitely many non-zero terms, $L$ a $f$-ample line bundle on $X.$ Then for sufficient large $n,$ we have $R\hom_f(L^{-n}, I_k)=f_*\hom(L^{-n},I_k)$ locally free for all $k$. Now take a locally free replacement of $\p$ with each term being sufficiently negative in the above sense, we obtain a double complex whose entries are all locally free sheaves. $R\hom_f(\p, \f)$ is represented by its total complex $\mathcal W_\dot$. 
    
    \item $\mathcal H^i(\mathcal W_\dot \otimes \mathcal G)\cong \mathcal H^i(Lg^*Rf_*R\hom(\p, \f)\otimes^L \mathcal G) \cong \mathcal Ext^i_{f_T}(Lg^*\p,\f\otimes^L \mathcal G).$ For the last isomorphism, we use a base change theorem; as we have $T$ being Noetherian, $f$ flat and projective, and $R\hom_f(\p,\f)$ quasi-isomorphic to a locally free complex of finite length.  
\end{enumerate}
\end{proof}


\noindent By \cref{keylemma}, we have a natural base change morphism $ \ext^i(\p, \f)\otimes k(s) \to R^iHom(\p_s, \f_s)$ for each $s\in S,$ where $k(s)$ is the residue field. 

\begin{dfn}\label{cwbc}
 We say $\ext^i(\p, \f)$ commutes with base change if the maps $\ext^i(\p, \f)\otimes k(s) \to R^iHom(\p_s, \f_s)$ are isomorphisms for all points $s\in S$ 
\end{dfn}

\begin{prop}\cite[proposition 2.3]{lange}\label{lange2.3}
Let $f:X\to S,$ $\p, \f $ be as before. Suppose that $\ext^1(\p, \f)$ commutes with base change, then there is a canonical bijection between the set of families of extensions of $\p$ by $\f$ over $S$ and $H^0(S, \ext^1(\p,\f)).$
\end{prop}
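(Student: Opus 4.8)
The plan is to construct the bijection locally and glue, exactly as in the sheaf-theoretic spirit of \cref{1strmk}. Given a family of extensions $\{\eta_s\}_{s\in S}$ in the sense of \cref{foe}, there is an open cover $(U_i)$ of $S$ together with local representatives $\eta_i\in R^1Hom_{f^{-1}(U_i)}(\p_{f^{-1}(U_i)},\f_{f^{-1}(U_i)})$ whose fibrewise restrictions recover the $\eta_s$. The first step is to observe that, after shrinking the $U_i$ to an affine cover, \cref{1strmk} identifies $R^1Hom_{f^{-1}(U_i)}(\p_{f^{-1}(U_i)},\f_{f^{-1}(U_i)})$ with $H^0(U_i,\ext^1(\p,\f)|_{U_i})$, so each $\eta_i$ is a section of $\ext^1(\p,\f)$ over $U_i$. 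The second step is to check that these sections agree on overlaps: on $U_i\cap U_j$ the two sections $\eta_i|_{U_i\cap U_j}$ and $\eta_j|_{U_i\cap U_j}$ have, by hypothesis, the same restriction $\eta_s$ at every point $s\in U_i\cap U_j$. Here is where the base-change assumption enters: since $\ext^1(\p,\f)$ commutes with base change, the map $\ext^1(\p,\f)\otimes k(s)\to R^1Hom(\p_s,\f_s)$ is an isomorphism for every $s$, and a section of a coherent sheaf on a (reduced? — no, we don't even need reducedness here, because the fibrewise values literally coincide, not just their images) Noetherian scheme that vanishes — i.e.\ that has equal value — at every point after the base-change map, must already be equal as sections, provided the base-change maps are injective. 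Thus $\eta_i$ and $\eta_j$ glue to a global section $\eta\in H^0(S,\ext^1(\p,\f))$.

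Conversely, given $\eta\in H^0(S,\ext^1(\p,\f))$, restrict to an affine open cover $(U_i)$; by \cref{1strmk} each $\eta|_{U_i}$ is represented by some $\eta_i\in R^1Hom_{f^{-1}(U_i)}(\p_{f^{-1}(U_i)},\f_{f^{-1}(U_i)})$, and we set $\eta_s:=\eta_i(s)$ for $s\in U_i$. This is well-defined because on overlaps the two candidate values both map to the image of $\eta\otimes k(s)$ under the (isomorphic) base-change map, hence coincide; and the data $(U_i,\eta_i)$ is precisely a family of extensions in the sense of \cref{foe}. These two assignments are visibly mutually inverse, which gives the claimed bijection.

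The one point requiring care — and the likely main obstacle — is the gluing in the first direction: one must verify that two local $R^1Hom$-classes with identical fibrewise restrictions are genuinely equal as sections of $\ext^1(\p,\f)$. This is exactly what the "commutes with base change" hypothesis buys us: the base-change morphisms being isomorphisms (in particular injective, and in fact compatible across the cover via \cref{keylemma}(b) applied with $\mathcal G=k(s)$) forces a section of the coherent sheaf $\ext^1(\p,\f)$ whose value at every closed point is zero to be the zero section — and since $S$ is Noetherian, checking at closed points suffices by Nakayama. Once this is in place, functoriality in $T\xrightarrow{g}S$ is automatic: $\ext^1(\p,\f)$ itself need not commute with base change along $g$, but the statement is only asserted over $S$, so no further compatibility is needed here. (This is precisely the defect that motivates the replacement of $\ext^1$ by $Lg^*R\hom_f(\p,\f)[1]$ in the next section, where base change is built in by construction.)
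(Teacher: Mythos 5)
The paper does not actually prove this proposition --- it is quoted verbatim from \cite[proposition 2.3]{lange} --- so your argument can only be judged on its own merits. Your overall strategy (identify $R^1Hom$ over affine opens with sections of $\ext^1(\p,\f)$ via \cref{1strmk}, then glue, using the base-change isomorphisms to pass between fibrewise extension classes and fibres of the sheaf) is the natural one, and the direction from a global section to a family of extensions is fine.

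The gap is exactly at the point you yourself flagged, and your resolution of it does not work. After using injectivity of the base-change maps, what you actually know is that the section $\eta_i-\eta_j$ of the coherent sheaf $\ext^1(\p,\f)$ over $U_i\cap U_j$ has zero image in $\ext^1(\p,\f)\otimes k(s)$ for every point $s$. Your parenthetical claim that reducedness is not needed ``because the fibrewise values literally coincide, not just their images'' is circular: coincidence of the fibrewise values \emph{is} the statement about images in the fibres. And the appeal to Nakayama is wrong: Nakayama only gives $(\eta_i-\eta_j)_s\in\mathfrak m_s\,\ext^1(\p,\f)_s$, not vanishing of the stalk. The auxiliary lemma you are implicitly using --- a section of a coherent sheaf whose value at every point is zero must be the zero section --- is false even on a smooth affine base: take $\mathcal G=\o_{\mathbb A^1}/(x^2)$ and the section $x$. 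Nothing in \cref{cwbc} rules out such behaviour for $\ext^1(\p,\f)$ (for instance, $R^1f_*$ of a line bundle on a family of elliptic curves whose fibrewise $H^1$ jumps with multiplicity two has exactly this shape, and commutes with base change at the level of points because top-degree base change is automatic). The implication you need does hold when the sheaf embeds into a locally free sheaf and the base is reduced, and this is precisely the combination the paper arranges for its replacement statement \cref{psi1}: there $T$ is assumed reduced and the relevant sheaf $\mathcal H^0(Lg^*R\hom_f(\p,\f)[1])$ is a kernel of a map of locally free sheaves. So as written your gluing step fails; to repair it you must either add hypotheses (e.g.\ $S$ reduced together with $\hom_f(\p,\f)=0$, so that $\ext^1(\p,\f)=\ker(\mathcal W_1\to\mathcal W_2)$ is a subsheaf of a vector bundle, as in \cref{exacts}), or find a different mechanism than pointwise evaluation for identifying the local classes on overlaps.
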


 However, the condition that $\ext^1(\p, \f)$ "commutes with base change" will not hold in our case. We observe the following:

\begin{lemma}\label{psi1} Suppose that $R^{i}\hom_f(\p,\f)=0$ for all $i\leq 0.$ Then for any morphism $g:T\to S$, where $T$ is a reduced Noetherian scheme, there is a canonical bijection between the set of families of extensions of $\p_T$ by $\f_T$ over $T$ and $Hom(\mathcal O_T, Lg^*R^\dot \hom_f(\p,\f)[1])$.
\end{lemma}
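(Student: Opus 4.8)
The plan is to construct the bijection locally and glue. First I would reduce to the affine case: choose an affine open cover $(U_i)$ of $S$ with preimages $V_i = g^{-1}(U_i)$, which we may take affine after refining, since the statement is local on $T$ and both sides of the claimed bijection are sheaves on $T$ (the left side by the very definition of a family of extensions in \cref{foe}, the right side because $Hom(\mathcal O_{(-)}, Lg^*R\hom_f(\p,\f)[1])$ is a sheaf). So it suffices to produce, functorially and compatibly with restriction, a bijection over each affine $T' = \operatorname{Spec} B$ mapping to an affine $\operatorname{Spec} A = U \subset S$.

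On such an affine piece I would argue as follows. Let $\mathcal W_\dot$ be the finite locally free complex quasi-isomorphic to $R\hom_f(\p,\f)$ from \cref{keylemma}(a). Since $\mathcal H^i(\mathcal W_\dot) = \ext^i(\p,\f) = 0$ for $i \le 0$ by hypothesis, the group $Hom_{D^b(T')}(\mathcal O_{T'}, Lg^*\mathcal W_\dot[1]) = \mathbb H^1(T', Lg^*\mathcal W_\dot)$ computing maps into the shift reduces, via the hypercohomology spectral sequence on the affine $T'$ (where higher cohomology of quasi-coherent sheaves vanishes), to $H^0(T', \mathcal H^1(Lg^*\mathcal W_\dot))$. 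By \cref{keylemma}(b) with $\mathcal G = \mathcal O_{T'}$, this is $H^0(T', \ext^1_{f_{T'}}(Lg^*\p, \f_{T'}))$, which (being affine, via \cref{1strmk} applied on $T'$) is the sheafification over $T'$ of $U' \mapsto R^1Hom(\p_{U'}, \f_{U'})$. Now I would invoke \cref{lange2.3}: over $T'$ one checks that $\ext^1_{f_{T'}}(Lg^*\p, \f_{T'})$ commutes with base change — this is where reducedness of $T$ enters, together with the fact that on a reduced scheme one can apply the cohomology-and-base-change machinery fiber by fiber as in \cite{lange} — yielding a canonical bijection between $H^0(T', \ext^1_{f_{T'}}(Lg^*\p, \f_{T'}))$ and the set of families of extensions of $\p_{T'}$ by $\f_{T'}$ over $T'$. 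Composing the three identifications gives the desired bijection on each affine piece.

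The final step is to check that these affine bijections are compatible with restriction to smaller affine opens and hence glue to a bijection of sheaves on $T$; this is formal once one tracks that every identification above — the spectral sequence edge map, the base change isomorphism of \cref{keylemma}(b), the comparison in \cref{1strmk}, and Lange's bijection — is natural in the affine chart. One should also verify the construction is independent of the chosen locally free model $\mathcal W_\dot$, which follows because $Hom(\mathcal O_T, Lg^*R\hom_f(\p,\f)[1])$ is intrinsically defined in the derived category.

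The main obstacle I expect is the base-change argument packaged into the appeal to \cref{lange2.3} on $T'$: one must be careful that, although $\ext^1(\p,\f)$ need \emph{not} commute with base change over $S$, after pulling back the relevant object and applying \cref{keylemma}(b) the resulting sheaf $\ext^1_{f_{T'}}(Lg^*\p, \f_{T'})$ really is the one for which Lange's proposition applies, and that the reducedness hypothesis on $T$ is exactly what licenses this. A secondary subtlety is justifying the collapse of the hypercohomology spectral sequence cleanly in the presence of a possibly unbounded-below but cohomologically-bounded complex — handled by replacing $\mathcal W_\dot$ by its good truncation $\tau^{\geq 1}\mathcal W_\dot$, which does not change $Hom(\mathcal O_T, Lg^*(-)[1])$ since the negative cohomology vanishes.
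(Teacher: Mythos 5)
The central step of your argument fails: the claim that $\mathcal{E}xt^1_{f_{T'}}(\p_{T'},\f_{T'})$ commutes with base change over $T'$, which is what licenses your appeal to \cref{lange2.3}, is false, and reducedness of $T$ is not the relevant issue. Whether $\ext^1$ commutes with base change is governed by the (non)vanishing of $\ext^2$ via cohomology and base change (\cref{bcthm}, and see the two-term presentation in \cref{exacts}), not by nilpotents in the base. Concretely, in the very situation this lemma is built for, take $T'$ to be an affine open of $S=\M$ meeting $\Gamma$ and $g$ the inclusion: $T'$ is smooth, hence reduced, yet $\ext^1(\p,\f)|_{T'}$ is a line bundle while $Ext^1(\p_t,\f_t)$ has dimension $2$ or $3$ at points $t\in\Gamma$ (by \cref{extfp} and Serre duality on $Ku(Y)$), so the base-change map $\ext^1(\p,\f)\otimes k(t)\to Ext^1(\p_t,\f_t)$ is not even surjective there. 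The whole purpose of \cref{psi1} is to provide a substitute for \cref{lange2.3} precisely because its hypothesis (\cref{cwbc}) fails here; routing the proof back through \cref{lange2.3} after pullback is circular and rests on a false assertion.

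Your first two reductions are fine and match the paper: on an affine chart, $Hom(\o_{T'},Lg^*R\hom_f(\p,\f)[1])$ is identified with the group $R^1Hom_{X_{T'}}(\p_{T'},\f_{T'})$ via \cref{keylemma} and \cref{1strmk}. But from there the bijection with families of extensions must be proved directly, as the paper does: (i) restricting a global class to all points of $T'$ produces a family in the sense of \cref{foe}; (ii) injectivity --- two classes agreeing at every point coincide --- which is where reducedness of $T$ genuinely enters (a section of a locally free term of $\mathcal W_\dot$ vanishing at all points of a reduced scheme is zero); and (iii) surjectivity, i.e.\ the local classes $\eta_i$ of a family agree on overlaps by (ii) and glue because $U\mapsto Hom(\o_U, Lg^*R\hom_f(\p,\f)|_U[1])$ is a sheaf. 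This sheaf property is exactly where the hypothesis $R^{i}\hom_f(\p,\f)=0$ for $i\leq 0$ is used, via \cite[corollary 2.1.22]{BBD} (gluing of morphisms into an object of $D^{\geq 0}$); in your write-up it is asserted in passing without justification, and the only place you invoke the vanishing hypothesis (collapsing the hypercohomology spectral sequence on an affine, which holds on affines regardless) is not where it is actually needed.
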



\begin{proof} 
Given $\phi \in Hom(\mathcal O_T, Lg^*R^\dot \hom_f(\p,\f)[1]),$ and a point $t\in T$ , we get $\phi_t:=\phi\otimes^Lk(t): k(t)\to Lg^*R^\dot \hom_f(\p,\f)\otimes k(t)[1].$ Note that by \cref{keylemma}, $Lg^*R^\dot \hom_f(\p,\f) \otimes k(t)[1] \cong \bigoplus_i R^iHom_{X_t}(\p_t, \f_t)[1-i],$ thus we obtain a collection $\{\phi_t\in R^1Hom_{X_t}(\p_t, \f_t)\}_{t\in T}.$ 
 
 Moreover, if we choose an affine open cover $\{U_i\}$ of $T,$ then by \cref{1strmk} (c), $\phi_i:=\phi _{U_i} \in Hom(\mathcal O_{U_i}, Lg^*R^\dot \hom_f(\p,\f) _{U_i}[1])= R^1Hom_{X_i}(\p _{i}, \f _{i}),$ where $X_i$ (resp. $\f_i, \p_i$) is the base change of $X$ (resp. $\f, \p$) to $U_i.$ Suppose that $\phi_i$ corresponds to an extension $0\to \f_i \to \e_i \to \p_i \to 0,$ and for any $t\in U_i,$ the class $\phi_{i}(t)$ represents the restriction $0\to \f_t \to \e_t \to \p_t \to 0.$ Then $\phi_i(t)$ is the image of $$k(t) \xrightarrow{\phi_i \otimes^L k(t)} \mathcal Ext^1_{f_i}(\p_i, \f_i)\otimes k(t) \to R^1Hom_{X_t}(\p_t,\f_t),$$  where $f_i$ is the base change of $f$ to $U_i.$ Note that $\phi_i(t) = \phi_t.$ Hence, $\{\phi_t\in R^1Hom_{X_t}(\p_t, \f_t)\}_{t\in T}$ is a family of extensions. 
 
 Conversely, given a family of extensions over $T,$ we have $\{U_i, \eta_i\}$ as in \cref{foe}. Without loss of generality, we may assume all $U_i$ are affine. Again by \cref{1strmk} and \cref{keylemma}, $\widetilde{Ext^1_{X_{i}}(\p _{U_i},\f _{U_i})}\cong \mathcal Ext^1_{f_i}(\p _{U_i},\f _{U_i})\cong \mathcal H^0(Lg^*R^\dot \hom_f(\p,\f) _{U_i}[1]).$ Thus $\eta_i \in Hom(\mathcal O_{U_i}, \mathcal H^0(Lg^*R^\dot \hom_f(\p,\f) _{U_i}[1])) \cong Hom(\mathcal O_{U_i}, Lg^*R^\dot \hom_f(\p,\f) _{U_i}[1]).$ By definition of a family of extensions, we have $\eta_i\otimes^L k(t) = \eta_j \otimes^L k(t),$ for all $t\in U_{ij}.$ 
 
Now the assumption that $R^i\hom_f(\p,\f)=0, i \leq 0$ means $R \hom(\p, \f)[1]\in D^b(S)^{\geq 0}.$ Then according to \cite[corollary 2.1.22]{BBD} (see also \cite[proposition 2.1.10]{Lie}), $$U \to Hom(\mathcal O_U, Lg^*R^\dot \hom_f(\p,\f)_{U}[1])$$ is a sheaf on $T$. Thus the classes $\eta_i$ glue, giving an element $\eta \in Hom(\mathcal O_{T}, Lg^*R^\dot \hom_f(\p,\f)[1]).$
 

\end{proof}

For later use, we recall a well-known theorem:

\begin{thm}\cite[theorem A.5 (i)]{LK}\label{bcthm}
Suppose that the base change map $\ext^i(\p, \f)\otimes k(s) \to R^iHom_{X_s}(\p_s, \f_s)$ is surjective for some $s\in S,$ then there is an open neighborhood $U$ of $s$ such that $\ext^i(\p, \f)\otimes k(s') \to R^iHom_{X_s'}(\p_s', \f_s')$ is isomorphic for all $s'\in U.$ 
\end{thm}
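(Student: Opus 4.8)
The plan is to reduce, via \cref{keylemma}, to a statement about a single bounded complex of vector bundles, and then run the standard cohomology-and-base-change argument. The claim is local on $S$, so assume $S=\operatorname{Spec}A$ is affine Noetherian and, shrinking around $s$, represent $R\hom_f(\p,\f)$ by a bounded complex $\mathcal W_\dot$ of finite locally free sheaves (\cref{keylemma}(a)), i.e.\ by a bounded complex $W^\dot$ of finite free $A$-modules. Feeding $\mathcal G=\o_S$ into \cref{keylemma}(b) identifies $\ext^i(\p,\f)$ with $\mathcal H^i(\mathcal W_\dot)=\widetilde{H^i(W^\dot)}$; feeding in $T=\operatorname{Spec}k(s')$ and $\mathcal G=\o_T$ identifies $R^iHom_{X_{s'}}(\p_{s'},\f_{s'})$ with $\mathcal H^i(\mathcal W_\dot\otimes k(s'))=H^i(W^\dot\otimes_A k(s'))$; and under these identifications the base change morphism becomes the canonical comparison $\beta^i(s')\colon H^i(W^\dot)\otimes_A k(s')\to H^i(W^\dot\otimes_A k(s'))$. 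Thus the theorem becomes the purely algebraic assertion: \emph{for a bounded complex $W^\dot$ of finite free modules over a Noetherian ring $A$, surjectivity of $\beta^i(s)$ forces $\beta^i(s')$ to be bijective for all $s'$ in a Zariski neighborhood of $s$.}

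The heart of the matter is to unwind $\operatorname{coker}(\beta^i(s))$. Writing $d^\dot$ for the differentials and $Z^i=\ker d^i$, a direct chase using only right-exactness of $-\otimes k(s)$ and the freeness of $W^{i-1},W^i,W^{i+1}$ gives $\operatorname{im}\beta^i(s)=\operatorname{im}\big(Z^i\otimes k(s)\to W^i\otimes k(s)\big)\big/\operatorname{im}\big(d^{i-1}\otimes k(s)\big)$, whence $\operatorname{coker}\beta^i(s)\cong\ker(d^i\otimes k(s))\big/\operatorname{im}\big(Z^i\otimes k(s)\to W^i\otimes k(s)\big)\cong\operatorname{Tor}_1^{\o_{S,s}}\!\big(\operatorname{coker}(d^i)_s,\,k(s)\big)$, the last step because $Z^i\hookrightarrow W^i\xrightarrow{d^i}W^{i+1}$ is the tail of a free resolution of $\operatorname{coker}(d^i)$. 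By the local criterion for flatness over the Noetherian local ring $\o_{S,s}$, this vanishes exactly when $\operatorname{coker}(d^i)$ is free at $s$; and a coherent sheaf that is free at a point is locally free on an open $U\ni s$. On $U$, flatness of $\operatorname{coker}(d^i)$ forces $\operatorname{im}(d^i)$, and then $Z^i$, to be flat, so $0\to Z^i\to W^i\to\operatorname{im}(d^i)\to 0$ and $0\to\operatorname{im}(d^i)\to W^{i+1}\to\operatorname{coker}(d^i)\to 0$ remain exact after $-\otimes k(s')$; a short diagram chase then identifies $\beta^i(s')$, for every $s'\in U$, with the tautological isomorphism $\operatorname{coker}(W^{i-1}\to Z^i)\otimes k(s')\xrightarrow{\ \sim\ }\operatorname{coker}\big(W^{i-1}\otimes k(s')\to Z^i\otimes k(s')\big)$. (This is the usual cohomology-and-base-change mechanism, cf.\ EGA III 7.7--7.8, Hartshorne \emph{Algebraic Geometry} III.12, or Mumford's \emph{Abelian Varieties} \S5; one may alternatively invoke it verbatim for $\mathcal W_\dot$.)

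The one genuinely non-formal point is the reduction in the first paragraph: since neither $\f$ nor $\p$ is flat over $S$ --- they are complexes, not sheaves --- one cannot argue with base change until one has passed to the perfect complex $\mathcal W_\dot$, which is exactly what \cref{keylemma} supplies. Once that is in place the argument is a short exercise in the local flatness criterion together with flat base change; the only identity to get right is $\operatorname{coker}(\beta^i(s))\cong\operatorname{Tor}_1^{\o_{S,s}}(\operatorname{coker}(d^i)_s,k(s))$, from which both the openness of the good locus and the bijectivity of $\beta^i$ there are immediate.
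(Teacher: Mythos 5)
The paper does not prove this statement at all---it is imported verbatim from the cited reference as a known base-change theorem---so there is no in-paper argument to compare against; your proof is a correct, self-contained substitute. The reduction via \cref{keylemma} to a bounded complex $W^{\dot}$ of finite free modules is indeed the only non-formal step (and is exactly why the classical statement applies despite $\f,\p$ being complexes rather than flat sheaves), and the pivotal identity $\operatorname{coker}\beta^i(s)\cong\operatorname{Tor}_1^{\o_{S,s}}(\operatorname{coker}(d^i)_s,k(s))$ checks out: from $0\to\operatorname{im}(d^i)\to W^{i+1}\to\operatorname{coker}(d^i)\to 0$ and $0\to Z^i\to W^i\to\operatorname{im}(d^i)\to 0$ one gets $\operatorname{Tor}_1(\operatorname{coker}(d^i),k(s))\cong\ker(d^i\otimes k(s))\big/\operatorname{im}(Z^i\otimes k(s)\to W^i\otimes k(s))$, which is exactly $\operatorname{coker}\beta^i(s)$. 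The local flatness criterion then gives freeness of $\operatorname{coker}(d^i)$ at $s$, coherence spreads this to an open $U$, and flatness of $\operatorname{coker}(d^i)$, hence of $\operatorname{im}(d^i)$ and $Z^i$, over $U$ makes $\beta^i(s')$ bijective there. This is the standard Grauert--Mumford cohomology-and-base-change mechanism correctly transplanted to the perfect complex $R\hom_f(\p,\f)$; no gaps.
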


\begin{rmk}
The assumption that  $R^i\hom_f(\p,\f)=0, i \leq 0$ in \cref{psi1} can be obtained, for example, by taking $\p, \f$ to be families of stable objects with slopes $\phi(\p_s) > \phi(\f_s).$ Because in that case we have $R^iHom(\p_s, \f_s)=0$ for all $i\leq 0$ and all $s\in S,$ then by \cref{bcthm} the claim follows. 
\end{rmk}

Next, we characterize the set of families of extensions whose restriction to any closed point does not split.

\begin{lemma}\label{psi2}
With the same assumption and notations as in \cref{psi1}, let $\mathcal W_\dot$ denote $R^\dot \hom_f(\p,\f)[1],$ then there is a canonical bijection: 
\begin{equation}
\left\{ \begin{aligned} 
&\psi: Lg^*(\mathcal W_\dot^\vee) \rightarrow \mathcal L_T, \text{where}\ \mathcal L_T\in Pic(T)\\
& \ \text{such that}\ \mathcal H^0(\psi): L^0g^*(\mathcal W_\dot^\vee) \twoheadrightarrow \mathcal L_T 
\end{aligned}
\right\} \longleftrightarrow \left\{ 
  \begin{aligned}
  &\text{families of nonsplitting exten- } \\ 
  &\text{sions of}\ \p_T\ \text{by}\ \f_T\ \text{over}\ T
  \end{aligned}
\right\}\Big{/}H^0(T,\mathcal O_T^*).
\end{equation}
\end{lemma}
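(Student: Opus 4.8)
The plan is to build on \cref{psi1}, which already identifies families of extensions of $\p_T$ by $\f_T$ over $T$ with $Hom(\o_T, Lg^*\mathcal W_\dot)$, and to translate the extra condition ``nowhere split, modulo scalars'' into the stated condition on maps $Lg^*(\mathcal W_\dot^\vee) \to \mathcal L_T$. First I would set up the adjunction/duality: since $\mathcal W_\dot = R\hom_f(\p,\f)[1]$ is quasi-isomorphic to a bounded complex of locally free sheaves (\cref{keylemma}(a)), the dual $\mathcal W_\dot^\vee$ is well-behaved, and for any line bundle $\mathcal L_T$ there is a natural identification $Hom(\o_T, Lg^*\mathcal W_\dot \otimes \mathcal L_T) \cong Hom(Lg^*\mathcal W_\dot^\vee, \mathcal L_T)$. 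Taking $\mathcal L_T = \o_T$ recovers the set in \cref{psi1}; allowing $\mathcal L_T$ to vary over $Pic(T)$ and quotienting the target of $\psi$ by automorphisms of $\mathcal L_T$ (i.e. by $H^0(T,\o_T^*)$) is precisely the bookkeeping that matches the quotient by $H^0(T,\o_T^*)$ on the right-hand side. So the bijection, before imposing the $\mathcal H^0$-surjectivity condition, should be essentially formal once one fixes the convention that two pairs $(\psi,\mathcal L_T)$ and $(\psi',\mathcal L_T')$ are identified when they differ by an isomorphism $\mathcal L_T \cong \mathcal L_T'$.

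Next I would pin down what ``$\mathcal H^0(\psi)$ is surjective'' means and why it corresponds to ``nowhere split.'' Because $R^i\hom_f(\p,\f)=0$ for $i\le 0$ (the standing hypothesis from \cref{psi1}), $\mathcal W_\dot$ lives in non-negative degrees, so $\mathcal W_\dot^\vee$ lives in non-positive degrees and $L^0 g^*(\mathcal W_\dot^\vee) = \mathcal H^0(Lg^*\mathcal W_\dot^\vee)$ is its top (degree-zero) cohomology; a map of complexes $Lg^*\mathcal W_\dot^\vee \to \mathcal L_T$ with $\mathcal L_T$ concentrated in degree $0$ is the same as a map $\mathcal H^0(Lg^*\mathcal W_\dot^\vee)\to \mathcal L_T$, and ``$\mathcal H^0(\psi)$ surjective'' is the condition that this map is a surjection of sheaves. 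I would then check pointwise: restricting to a closed point $t\in T$ and using \cref{keylemma}(b) to compute $\mathcal H^0(Lg^*\mathcal W_\dot^\vee)\otimes k(t)$ in terms of $R^1Hom_{X_t}(\p_t,\f_t)^\vee$, the stalk map $\mathcal H^0(\psi)\otimes k(t)$ is exactly the functional on $R^1Hom_{X_t}(\p_t,\f_t)$ picking out the extension class $\eta_t$; its surjectivity onto the one-dimensional $k(t)$ is equivalent to $\eta_t \neq 0$, i.e. the restricted extension does not split. Surjectivity of a sheaf map can be checked on stalks/fibers (Nakayama), so global $\mathcal H^0(\psi)$-surjectivity $\iff$ $\eta_t\neq 0$ for all $t$, which is the nowhere-split condition.

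With these two pieces in hand the construction of the bijection goes: given $(\psi,\mathcal L_T)$ with $\mathcal H^0(\psi)$ surjective, apply the duality above to read off $\eta\in Hom(\o_T, Lg^*\mathcal W_\dot\otimes\mathcal L_T)$, then \cref{psi1} (applied after twisting by the line bundle $\mathcal L_T$, or equivalently working locally where $\mathcal L_T$ is trivial and using that families of extensions form a sheaf, exactly as in the proof of \cref{psi1}) gives a family of extensions; the surjectivity of $\mathcal H^0(\psi)$ guarantees it is nowhere split by the pointwise analysis; and rescaling $\psi$ by a global unit does not change the underlying family, so the map descends to the quotient by $H^0(T,\o_T^*)$. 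Conversely, a nowhere-split family of extensions is locally a class $\eta_i\in R^1Hom_{X_i}(\p_i,\f_i)$, which dualizes to a local surjection $L^0g^*(\mathcal W_\dot^\vee)|_{U_i}\twoheadrightarrow \o_{U_i}$ (nowhere-vanishing by nonsplitting); the cokernel-free quotient line bundles glue (the local choices differ by units, which is exactly the ambiguity killed on both sides), producing a global $(\psi,\mathcal L_T)$, and one checks the two constructions are mutually inverse.

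The main obstacle I expect is the globalization/gluing step when $\mathcal L_T$ is a nontrivial line bundle: \cref{psi1} is stated for $Hom(\o_T,\mathcal W_\dot)$, so to handle a general quotient line bundle I must either (i) prove a twisted version of \cref{psi1} with $\o_T$ replaced by $\mathcal L_T^{-1}$ — which should follow verbatim since $R\hom_f(\p,\f)[1]\otimes\mathcal L_T$ is again in non-negative degrees and the sheaf property from \cite[corollary 2.1.22]{BBD} still applies — or (ii) argue that on a cover trivializing $\mathcal L_T$ the local families of nonsplitting extensions glue and the gluing data is recorded precisely by the transition functions of $\mathcal L_T$ modulo the $H^0(\o^*)$-action. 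Either way, the delicate point is making sure that the equivalence relations on the two sides (isomorphism of the target line bundle on the left, scaling by $H^0(T,\o_T^*)$ on the right) match up exactly, and that the passage between ``map of complexes into $\mathcal L_T[0]$'' and ``surjection of degree-zero cohomology sheaves'' is canonical — this is where I would be most careful to avoid a spurious choice or an off-by-one in the t-structure truncation.
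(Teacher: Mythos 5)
Your proposal is correct and follows essentially the same route as the paper: use (a twisted form of) \cref{psi1} to identify nowhere-split families modulo $H^0(T,\mathcal O_T^*)$ with fiberwise-nonzero maps $\mathcal L_T \to Lg^*\mathcal W_\dot$, dualize using that $\mathcal W_\dot$ is a perfect complex, and translate fiberwise non-vanishing into surjectivity of $\mathcal H^0(\psi)$. The only (immaterial) difference is in the last step, where the paper compares $cone(\phi)$ with $cone(\psi)$ via dualizing, while you use the truncation fact that maps from an object of $D^{\leq 0}$ into a sheaf factor through $\mathcal H^0$ together with Nakayama on the cokernel.
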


\begin{proof}
By \cref{psi1}, the right hand side of the bijection above is the same as $$\{\phi: \mathcal L_T \rightarrow Lg^*(\mathcal W_\dot): \phi\otimes^L k(t)\neq 0, \forall t\in T\}.$$ We want to show that this is equivalent to the left hand side. By \cref{dual}, we have $\psi:=\phi^\vee: Lg^*(\mathcal W_\dot^\vee) \to \mathcal L^\vee_T$. Applying $-\otimes^L k(t)$ to the triangle $\mathcal L_T\xrightarrow{\phi} Lg^*(\mathcal W_\dot)[1] \to cone(\phi),$ we see that $\phi\otimes^L k(t)\neq 0$ if and only if $\mathcal H^{-1}(cone(\phi)\otimes^L k(t))\xleftarrow{\sim}\mathcal H^{-1}(Lg^*(\mathcal W_\dot)\otimes^L k(t))\cong 0,$ because $\mathcal W_\dot \in D^b(S)^{\geq 0}$ as we have seen in \cref{psi1}. On the other hand, we have $ Lg^*(\mathcal W_\dot^\vee)\xrightarrow{\psi} \mathcal L_T^\vee \to cone(\psi),$ and $\mathcal H^0(\psi)$ is surjective if and only if $\mathcal H^0(cone(\psi)\otimes^L k(t))=0.$ Note that $cone(\phi)^\vee [1] \cong cone(\psi),$ so $\mathcal H^{-1}(cone(\phi)\otimes^L k(t))\cong \mathcal H^0(cone(\psi)^\vee \otimes^L k(t)) \cong \mathcal H^0(cone(\psi)\otimes^L k(t))^\vee.$ Thus, the conditions on $\phi$ and $\psi$ are equivalent and the bijection is given by dualizing.
\end{proof}
 Therefore, we can rewrite the functor \eqref{fun2} as follow:
\begin{dfn}\label{psi}
The functor of families of non-splitting extensions of $\p$ by $\f$ is
\begin{equation} 
\Psi: (g: T\rightarrow S) \longrightarrow \left\{ \begin{aligned} 
&\psi: Lg^*(\mathcal W_\dot^\vee) \rightarrow \mathcal L_T, \text{where}\ \mathcal L_T\in Pic(T)\\
& \ \text{such that}\ \mathcal H^0(\psi): L^0g^*(\mathcal W_\dot^\vee) \twoheadrightarrow \mathcal L_T 
\end{aligned}
\right\},
\end{equation}
from the category of Noetherian schemes over $S$ to the category of sets, where $\mathcal W_\dot:= R^\dot \hom_f(\p,\f)[1]$.
\end{dfn}

\begin{rmk}
Note that we do not restrict the functor to the category of reduced Noetherian schemes, even though $\cref{psi1}$ requires reducedness of $T.$ Indeed, we can take $Hom(\o_T,  Lg^*R^\dot \hom_f(\p,\f)[1])$ as our definition of families of extension over $T.$ 
\end{rmk}

\section{The Voisin map}\label{sec4}

Now we specialize to our case: suppose that $Y$ is a cubic fourfold not containing any plane, $Ku(Y)$ is the Kuznetsov component of $Y$, $M_1:=M_\sigma(\lambda_1)$ and $M_2:=M_\sigma(\lambda_1+\lambda_2)$ are the moduli spaces, then we set $S:= M_1\times M_2,$ $X:= M_1\times M_2 \times Y$, $f$ to be the projection, and $\f$ (resp. $\p$) to be the pullback of a universal families on $M_1 \times Y$ (resp. $M_2 \times Y$). Recall that $\f, \p$ are families of objects in a heart $\mathcal A\subset Ku(Y)$ associated to the stability condition $\sigma$ constructed in \cite[theorem 1.2]{BLMS}. Also we specialize the functor $\Psi$ (\cref{psi}) to this setting. We may use the notation $Ext^i(-,-):=R^iHom(-,-).$

Via a canonical identification $\M \cong F(Y)\times F(Y),$ we define the incident locus $\Gamma:=\{(L_1, L_2)\in \M : L_1 \cap L_2 \neq \emptyset \}$ and the type II locus $\Delta_2:=\{L\in \Delta\cong F(Y): \mathcal N_{L/Y}\cong \mathcal O(1)^2\oplus \mathcal O\}$ of the diagonal $\Delta \subset \M$ (see e.g. \cite{deb}).  

\begin{lemma}\label{extfp} Recall $F_{L}$ and $P_{L}$ from \cref{moduli}, then $Ext^i(F_{L_1}, P_{L_2})=0$ unless $i=0\ or \ 1$,  and
\begin{equation}
Ext^1(F_{L_1}, P_{L_2}) \cong \left\{ 
  \begin{aligned}
  & \mathbb C, \ \ \ (L_1, L_2)\in \M\setminus\Gamma,  \\ 
  & \mathbb C^2, \ \ (L_1, L_2) \in \Gamma\setminus\Delta_2, \\
  & \mathbb C^3, \ \ (L_1, L_2) \in \Delta_2.
  \end{aligned} \right.
\end{equation}
\end{lemma}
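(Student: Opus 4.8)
The plan is to compute $\mathrm{Ext}^\bullet(F_{L_1}, P_{L_2})$ by using the two defining triangles from \cref{moduli}, reducing everything to computations involving ideal sheaves of lines in $Y$, and then reading off the dimension from the geometry of how $L_1$ and $L_2$ sit in $Y$. Concretely, $F_{L_1} = \ker(H^0(I_{L_1}(1))\otimes \o_Y \twoheadrightarrow I_{L_1}(1))$, so there is a short exact sequence $0\to F_{L_1}\to H^0(I_{L_1}(1))\otimes \o_Y \to I_{L_1}(1)\to 0$ in $D^b(Y)$; and $P_{L_2}$ fits into the non-splitting triangle $\o_Y(-1)[1]\to P_{L_2}\to I_{L_2}\to$, equivalently $I_{L_2}\to \o_Y(-1)[2]\to P_{L_2}[1]$. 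Applying $R\mathrm{Hom}(-, P_{L_2})$ to the first sequence and $R\mathrm{Hom}(F_{L_1}, -)$ to the second, the problem reduces to understanding $R\mathrm{Hom}(I_{L_1}(1), \o_Y(-1))$, $R\mathrm{Hom}(I_{L_1}(1), I_{L_2})$, $R\mathrm{Hom}(\o_Y, \o_Y(-1))$, $R\mathrm{Hom}(\o_Y, I_{L_2})$, together with the vanishing $R\mathrm{Hom}(\o_Y, P_{L_2}) = 0$ and $R\mathrm{Hom}(F_{L_1}, \o_Y(-1)[1]) $-type terms coming from $F_{L_1}, P_{L_2}\in Ku(Y)$ being orthogonal to the exceptional collection. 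Several of these are immediate: $H^\bullet(Y, \o_Y(-1)) = 0$, and $H^\bullet(Y, I_{L_2}) = 0$ except $H^0 = \mathbb C$ canceling against $H^0(\o_Y)$, so the $\o_Y$-contributions essentially drop out and one is left with $R\mathrm{Hom}(I_{L_1}(1), I_{L_2})$ as the governing object, after bookkeeping the shifts.

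Next I would pin down $R\mathrm{Hom}(I_{L_1}(1), I_{L_2})$, i.e. compute $\mathrm{Ext}^\bullet_Y(I_{L_1}(1), I_{L_2})$. Using $0\to I_{L_1}(1)\to \o_Y(1)\to \o_{L_1}(1)\to 0$ and $0\to I_{L_2}\to \o_Y\to \o_{L_2}\to 0$, this reduces to cohomology of $\o_Y(-1)$, $\o_{L_2}(-1)$, $\o_{L_1}(-1)$ (all with no higher cohomology and the first two acyclic), and crucially to $R\mathrm{Hom}_Y(\o_{L_1}(1), \o_{L_2})$, which by adjunction/local computation depends only on whether $L_1$ and $L_2$ meet and, if so, on the local intersection picture — a point where they are either transverse or the scheme structure degenerates. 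This is where the case division $\M\setminus\Gamma$ vs. $\Gamma\setminus\Delta_2$ vs. $\Delta_2$ enters: when $L_1\cap L_2 = \emptyset$ the cross term vanishes and one is left with the "expected" one-dimensional $\mathrm{Ext}^1$ (matching $\chi(F_{L_1}, P_{L_2}) = -(\lambda_1, \lambda_1+\lambda_2) = -1$ from the $A_2$-pairing, which also gives the vanishing of $\mathrm{Ext}^i$ for $i\neq 0,1$ once one knows $\mathrm{Hom} = 0$); when they meet at a point the local $\mathrm{Ext}$ between the structure sheaves of the two lines contributes an extra $\mathbb C$; and on $\Delta_2$, where $L_1 = L_2 = L$ with $N_{L/Y}\cong \o(1)^2\oplus\o$, the self-Hom/Ext of $\o_L$ inside the special cubic surface swept by $L$ contributes one further dimension. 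I would verify $\mathrm{Hom}(F_{L_1}, P_{L_2}) = 0$ directly: a nonzero map would, by stability and the phase inequality $\phi(F_{L_1}) < \phi(P_{L_2})$ from \cref{moduli}'s companion lemma, have to be injective with a quotient of strictly larger phase — combined with the class computation this is impossible. Given $\mathrm{Hom} = 0$ and $\chi = -1$, the entire content is $\dim\mathrm{Ext}^1$, and the three values $1, 2, 3$ follow by adding the excess dimension from the local intersection/normal-bundle contribution in each stratum, plus checking $\mathrm{Ext}^2 = \mathrm{Ext}^3 = 0$ (the latter from Serre duality $\mathrm{Ext}^3(F_{L_1}, P_{L_2}) \cong \mathrm{Ext}^{-1}(P_{L_2}, F_{L_1})^\vee = 0$ since both lie in the heart, and $\mathrm{Ext}^2(F_{L_1},P_{L_2})\cong \mathrm{Hom}(P_{L_2}, F_{L_1})^\vee$, which vanishes again by the phase inequality $\phi(P_{L_2}) > \phi(F_{L_1})$).

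The main obstacle is the local computation of $R\mathrm{Hom}_Y(\o_{L_1}, \o_{L_2})$ (equivalently the cross term $R\mathrm{Hom}(\o_{L_1}(1),\o_{L_2})$) in the incident and type-II strata: away from $\Gamma$ it is zero, but on $\Gamma$ one must carefully identify how much of $R\Gamma$ of the relevant $\mathcal{E}xt$-sheaves on $L_1\cup L_2$ survives, and on $\Delta_2$ one must use the specific splitting type $N_{L/Y}\cong\o(1)^2\oplus\o$ (as opposed to the generic $\o(1)\oplus\o\oplus\o(-1)$ on $\Delta\setminus\Delta_2$, which would give the $\Gamma\setminus\Delta_2$ value) to extract exactly one extra dimension. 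I expect this to require either a local model of two lines meeting at a point in a smooth fourfold, or a Koszul/normal-bundle computation, together with care about the $\o_Y$-twists so that the extension and mutation shifts are tracked correctly; everything else is a routine diagram chase through the two defining triangles and the vanishing coming from membership in $Ku(Y)$.
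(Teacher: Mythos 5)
Your skeleton agrees with the paper's up to the decisive step: the paper also starts from the two defining sequences, kills $Ext^i$ for $i\neq 0,1$ via Serre duality $S_{Ku(Y)}=[2]$ together with $Hom(P_{L_2},F_{L_1})=0$ from the phase inequality, and then uses $\chi(F_{L_1},P_{L_2})=-(\lambda_1,\lambda_1+\lambda_2)=-1$. But your treatment of $Hom(F_{L_1},P_{L_2})$ contains a genuine error that breaks the proof. You claim $Hom(F_{L_1},P_{L_2})=0$ in all cases because $\phi(F_{L_1})<\phi(P_{L_2})$. Stability only forbids nonzero maps from the \emph{larger} phase to the smaller one; a nonzero map $F_{L_1}\to P_{L_2}$ whose image has phase in between is entirely compatible with both objects being stable, and no ``class computation'' rules it out. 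Such maps do exist exactly on $\Gamma$: the paper shows, by chasing the two defining sequences (using $RHom(\o_Y,P_{L_2})=0$ since $P_{L_2}\in Ku(Y)$), that $Hom(F_{L_1},P_{L_2})\cong Hom(I_{L_1/Y}(H),\o_{L_2})$, which is $0$, $\mathbb C$, $\mathbb C^2$ on $\M\setminus\Gamma$, $\Gamma\setminus\Delta_2$, $\Delta_2$ respectively, and then $ext^1=hom+1$ follows from $\chi=-1$.

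The error also makes your proposal internally inconsistent: if $Hom=0$ and $Ext^i=0$ for $i\neq 0,1$, then $\chi=-1$ forces $ext^1=1$ identically on $\M$, leaving no room for the ``extra $\mathbb C$'' you plan to extract from the local intersection of $L_1$ with $L_2$ or from the splitting type of $N_{L/Y}$ --- and it would falsify the lemma, since the whole point is that the Voisin map degenerates on $\Gamma$. The jump in $Ext^1$ is carried entirely by the jump in $Hom$; once you compute $Hom(I_{L_1/Y}(H),\o_{L_2})$ stratum by stratum (which is where the incidence of the lines and, on the diagonal, the normal bundle $N_{L/Y}$ actually enter), the delicate local $\mathcal{E}xt$-sheaf analysis you flag as the ``main obstacle'' is not needed. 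So: keep your reduction and your $Ext^2$ argument, discard the claimed vanishing of $Hom(F_{L_1},P_{L_2})$, and redirect the local computation toward $Hom$ rather than toward an excess in $Ext^1$.
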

\begin{proof}
First, recall the defining sequences of $F_L$ and $P_L$ respectively: $$F_L\to H^0(I_{L/Y}(H))\otimes \mathcal O_Y \to I_{L/Y}(H),\ \ \ \mathcal O_Y(-H)[1]\to P_{L} \to I_{L/Y}.$$ 
One can easily verify that $Hom^{-1}(F_{L_1}, \mathcal O_Y(-H)[1])=0,$ and thus by Serre duality on $Ku(Y)$ (since both $F_{L}, P_{L} \in Ku(Y)$ and $Ku(Y)$ is a full subcategory of $D^b(Y)$), we get $Ext^i(F_{L_1}, P_{L_2})=0$ unless $i=0, 1\ or\ 2.$ Also, according to \cite{LLMS}, $F_{L_1}$ and $P_{L_2}$ are stable with respective to a stability condition with phases $\phi(F_{L_1})<\phi(P_{L_2}),$ therefore $Ext^2(F_{L_1}, P_{L_2})=Hom(P_{L_2}, F_{L_1})=0.$ 

Next, recall that $\chi(F_{L_1}, P_{L_2})=-(\lambda_1, \lambda_1+\lambda_2)=-1 = hom(F_{L_1}, P_{L_2})-ext^1(F_{L_1}, P_{L_2})$, so we may just compute $Hom(F_{L_1}, P_{L_2}).$ Using long exact sequences derived from the two defining sequences, we obtain: 
\begin{equation*}
Hom(F_{L_1}, P_{L_2}) \cong Hom(I_{L_1/Y}(H), \mathcal O_{L_2}) \cong \left\{ 
  \begin{aligned}
  &  0, \ \ \ (L_1, L_2)\in \M\setminus\Gamma,  \\ 
  & \mathbb C^1, \ \ (L_1, L_2) \in \Gamma\setminus\Delta_2, \\
  & \mathbb C^2, \ \ (L_1, L_2) \in \Delta_2.
  \end{aligned} \right.
\end{equation*}
\end{proof}

\begin{cor}\label{exacts} We have the following two exact sequences:
\begin{enumerate}[(a).]
    \item $0\to \mathcal V_0 \xrightarrow{\alpha} \mathcal V_1 \to \ext^1(\f, \p) \to 0,$ where $\mathcal V_0, \mathcal V_1$ are locally free sheaves and $\ext^1(\f,\p)$ commutes with base change;
    \item $0\to \ext^1(\p, \f)\to \mathcal W_1 \xrightarrow{\beta} \mathcal W_2 \to \ext^2(\p, \f) \to 0,$ where $\mathcal W_1, \mathcal W_2$ are locally free sheaves, $\ext^2(\p,\f)$ commutes with base change and $\ext^1(\p,\f)$ is a line bundle. 
\end{enumerate}
\end{cor}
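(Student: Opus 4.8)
The plan is to realise $R\hom_f(\f,\p)$ and $R\hom_f(\p,\f)$ as explicit two-term complexes of vector bundles, concentrated in degrees $[0,1]$ and $[1,2]$ respectively, and then to read both sequences off the differentials. By \cref{keylemma}(a), each of $R\hom_f(\f,\p)$, $R\hom_f(\p,\f)$ is quasi-isomorphic to a bounded complex $\mathcal W^\bullet$ of finite-rank locally free sheaves on $S=\M$, and by \cref{keylemma}(b) applied to the inclusion $s\hookrightarrow S$ of a point, $\mathcal H^i(\mathcal W^\bullet\otimes^{L}k(s))\cong\operatorname{Ext}^i_Y(F_{L_1},P_{L_2})$, resp. $\operatorname{Ext}^i_Y(P_{L_2},F_{L_1})$, for $s=(L_1,L_2)$. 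By \cref{extfp} the first group vanishes for $i\notin\{0,1\}$; and since the Serre functor of $Ku(Y)$ is $[2]$, one has $\operatorname{Ext}^i(P_{L_2},F_{L_1})\cong\operatorname{Ext}^{2-i}(F_{L_1},P_{L_2})^{\vee}$, so the second group vanishes for $i\notin\{1,2\}$, with fibrewise dimensions $(1,2,3)$ for $\operatorname{Ext}^1$ and $(0,1,2)$ for $\operatorname{Ext}^2$ over $\M\setminus\Gamma$, $\Gamma\setminus\Delta_2$ and $\Delta_2$.

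Next I would trim $\mathcal W^\bullet$ to its amplitude window. If $\mathcal W^\bullet$ has a nonzero term in a degree $q$ strictly above the window, the top fibrewise cohomology vanishes everywhere, so $\mathcal W^{q-1}\otimes k(s)\to\mathcal W^{q}\otimes k(s)$ is surjective for all $s$; by Nakayama $\mathcal W^{q-1}\to\mathcal W^{q}$ is surjective, hence split (its target is locally free), and one splits off an acyclic two-term subcomplex to shorten $\mathcal W^\bullet$ from the top. Dually, a nonzero term in a degree $p$ below the window makes $\mathcal W^{p}\otimes k(s)\to\mathcal W^{p+1}\otimes k(s)$ injective for all $s$; dualising turns this into a surjection of vector bundles, which again splits, so $\mathcal W^{p}\to\mathcal W^{p+1}$ is a subbundle inclusion and one shortens from below. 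Iterating, $R\hom_f(\f,\p)\simeq[\mathcal V_0\xrightarrow{\alpha}\mathcal V_1]$ in degrees $0,1$ and $R\hom_f(\p,\f)\simeq[\mathcal W_1\xrightarrow{\beta}\mathcal W_2]$ in degrees $1,2$, all terms locally free of finite rank.

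For part (a), $\ext^1(\f,\p)=\operatorname{coker}\alpha$, and as the cokernel of a map of vector bundles it commutes with base change: right-exactness of $-\otimes\mathcal G$ identifies $\operatorname{coker}(\alpha)\otimes\mathcal G$ with $\mathcal H^1(\mathcal W^\bullet\otimes^{L}\mathcal G)$, which for $\mathcal G=k(s)$ is $\operatorname{Ext}^1(F_{L_1},P_{L_2})$. Also $\ext^0(\f,\p)=\ker\alpha$, and since $\operatorname{Hom}(F_{L_1},P_{L_2})=0$ off $\Gamma$ by \cref{extfp}, the map $\alpha$ is injective at the generic point of the integral variety $\M$; being a subsheaf of the torsion-free $\mathcal V_0$, $\ker\alpha$ therefore vanishes. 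This yields the short exact sequence of (a). (It is consistent that $\ext^1(\f,\p)$ fails to be locally free: ``commutes with base change'' only asserts that its fibres compute $\operatorname{Ext}^1$ of the fibres, and these jump along $\Gamma$ and $\Delta_2$.)

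For part (b), $\ext^2(\p,\f)=\operatorname{coker}\beta$ commutes with base change by the same right-exactness argument, and $\ext^1(\p,\f)=\ker\beta$. The kernel of a morphism of locally free sheaves is reflexive, and $\ker\beta$ has rank one: over $\M\setminus\Gamma$ the fibrewise $\operatorname{Ext}^2$ vanishes, so $\beta$ is generically surjective and $\operatorname{rk}\ker\beta=\operatorname{rk}\mathcal W_1-\operatorname{rk}\mathcal W_2=-\chi(P_{L_2},F_{L_1})=-\chi(F_{L_1},P_{L_2})=1$ (the last value being recorded in the proof of \cref{extfp}). A rank-one reflexive sheaf on the smooth, hence locally factorial, variety $\M\cong F(Y)\times F(Y)$ is invertible, so $\ext^1(\p,\f)$ is a line bundle, completing the four-term sequence of (b). The computations are routine once \cref{extfp} is in hand; the one point requiring care is the asymmetry of base change — the \emph{top} cohomology of a complex of vector bundles always commutes with base change (this gives the statements about $\ext^1(\f,\p)$ and $\ext^2(\p,\f)$), while the \emph{bottom} one does not, which is exactly why $\ext^1(\p,\f)$ is a line bundle despite having jumping fibres, and that last fact must be argued through reflexivity on a smooth base rather than through any base-change property. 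Alternatively, once (a) is established one can obtain (b) from the relative Serre duality $R\hom_f(\p,\f)\simeq R\hom_f(\f,\p)^{\vee}[-2]$ (compatible with the fibrewise Serre functor $[2]$ on $Ku(Y)$), which realises $[\mathcal W_1\xrightarrow{\beta}\mathcal W_2]$ as $[\mathcal V_1^{\vee}\xrightarrow{\alpha^{\vee}}\mathcal V_0^{\vee}]$ and hence $\ext^1(\p,\f)\cong\hom(\ext^1(\f,\p),\o_S)$, manifestly reflexive of rank one.
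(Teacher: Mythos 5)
Your argument is correct and follows essentially the same route as the paper: both reduce $R\hom_f(\f,\p)$ and $R\hom_f(\p,\f)$ to two-term locally free representatives whose cohomological amplitude is pinned down by \cref{extfp}, read the sequences off the kernel and cokernel of the differential, and deduce that $\ext^1(\p,\f)$ is invertible from rank one plus reflexivity on the smooth base. The only cosmetic difference is that the paper verifies the base-change statements via a Grothendieck spectral sequence, while you use the equivalent observation that the top cohomology of a complex of locally free sheaves commutes with arbitrary base change.
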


\begin{proof}

\begin{enumerate}[(a).]
    \item By \cref{keylemma}, we have a locally free replacement $\mathcal{V}_\dot$ of $R^\dot\hom_f(\f, \p).$ By \cref{bcthm} and \cref{extfp}, $\mathcal V_\dot$ at most have cohomologies at degree 0 or 1. Thus we have $$0\to \hom_f(\f, \p) \to \mathcal V_0\to \mathcal V_1 \to \ext^1(\f, \p)\to 0.$$  $\hom_f(\f,\p)$ must be zero as well, because otherwise it would be a torsion sheaf again by \cref{bcthm} and \cref{extfp}, but $\mathcal V_0$ is locally free.
    
        Since $f: \M \times Y \to \M$ and $\p$ are both flat over $\M,$ given any locally free sheaf $\mathcal L,$ $\hom_f(\mathcal L, \p)$ is flat, thus for point $s\in \M$ we have a Grothendieck spectral sequence associated to $R\hom_f(-,\p)\circ -\otimes^L k(s),$ whose $E_2$ page concentrates in two rows and in particular gives $$\ext^1(\f, \p)\otimes k(s) = E_2^{0,1} \cong E_{\infty}^1= Ext^1_{\footnotesize{Ku(Y)}}(\f_s,\p_s).$$
    \item Similarly, we have a locally free replacement $\mathcal{W}_\dot$ of $R^\dot\hom_f(\p, \f)$ and moreover an exact sequence $$0\to \ext^1(\p,\f) \to \mathcal W_1 \to \mathcal W_2 \to \ext^2(\p, \f) \to 0.$$ The Grothendieck spectral sequence this time yields $$\ext^2(\p,\f)\otimes k(s) \cong Ext^2_{\footnotesize}(\p_s,\f_s).$$ Thus, $\ext^2(\p,\f)$ commutes with base change. Note that $\ext^1(\p,\f)$ is of rank one and reflexive, and thus a line bundle.

\end{enumerate}
\end{proof}


\begin{rmk}
On the open subset $U:=\M \setminus \Gamma,$ $\ext^1(\p, \f)_U$ commutes with base change since $\ext^2(\p,\f)=0$, then by \cite[corollary 4.5]{lange} (which in addition needs the fact that $\hom_f(\p,\f)=0$) we have a universal nonsplitting extension $$0\to \f _U \boxtimes (\ext^1(\p, \f) _U)^\vee \to \mathcal E _U \to \p _U\to 0.$$ By results of \cite{LLMS} and \cite{LPZ}, $\mathcal E_s$ is stable with respect to a Bridgeland stability condition $\sigma$ for all closed points $s\in U.$ As $\M$ is reduced, this $\e$ defines a rational map $v: \M \dashrightarrow M_\sigma(2\lambda_1+\lambda_2).$ We refer to this as the Voisin map. 
\end{rmk}

\begin{lemma}\label{dual}
${(\mathcal V_0 \xrightarrow{\alpha} \mathcal V_1)}^\vee [-2] \cong (\mathcal W_1 \xrightarrow{\beta} \mathcal W_2).$ 
\end{lemma}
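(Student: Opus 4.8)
The plan is to derive the claimed quasi-isomorphism from Grothendieck--Serre duality for the projective flat morphism $f: \M\times Y\to \M$, combined with the fact that the Serre functor on $Ku(Y)$ is the shift $[2]$. Recall from \cref{keylemma}(a) that $R\hom_f(\f,\p)$ is represented by a finite locally free complex $\mathcal V_\dot$, and $R\hom_f(\p,\f)$ by a finite locally free complex $\mathcal W_\dot$. By \cref{exacts}, after discarding terms that contribute no cohomology, we may take $\mathcal V_\dot$ to be the two-term complex $[\mathcal V_0\xrightarrow{\alpha}\mathcal V_1]$ placed in degrees $0,1$, and $\mathcal W_\dot$ the two-term complex $[\mathcal W_1\xrightarrow{\beta}\mathcal W_2]$ placed in degrees $1,2$ (here I am mildly abusing notation: in \cref{psi2} $\mathcal W_\dot$ denoted the shift $R^\dot\hom_f(\p,\f)[1]$, but for this lemma it is cleaner to keep the unshifted complex, which is what \cref{exacts}(b) writes down). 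So the content of the lemma is the single statement
\[
R\hom_f(\p,\f)\;\cong\; \bigl(R\hom_f(\f,\p)\bigr)^\vee[-2]
\]
in $D^b(\M)$, refined to the explicit two-term representatives.

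First I would establish the global duality. Fiberwise, Serre duality on $Ku(Y)$ with Serre functor $[2]$ gives $\operatorname{Ext}^i(\p_s,\f_s)\cong \operatorname{Ext}^{2-i}(\f_s,\p_s)^\vee$; the relative incarnation of this is what we need over $\M$. Since $Y$ (hence $\M\times Y\to\M$) is smooth of relative dimension $4$ with relative canonical bundle $\omega_{f}=\o(-3H)$, relative Serre--Grothendieck duality reads $Rf_*R\hom(A,B)^\vee\cong Rf_*R\hom(B,A\otimes\omega_f[4])$ for $A,B$ perfect. Applying this with $A=\p$, $B=\f$ gives $R\hom_f(\p,\f)^\vee\cong R\hom_f(\f,\p\otimes\omega_f[4])$. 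To convert the right side back to $R\hom_f(\f,\p)$ I would use that $\p,\f$ land in $Ku(Y)$ and that the Serre functor of $Ku(Y)$ is $[2]$, not $\otimes\omega_f[4]$: concretely, the universal families $\p$ and $\f$ are built (via \cref{moduli}) from mutations of $I_L(1)$, and $\mathbb R_{\o}\mathbb L_{\o}(I_L(1))\otimes\o(-1)$ relates the ``twist by $\omega_Y$'' Serre functor of $D^b(Y)$ to the shift on $Ku(Y)$; alternatively, and more cleanly, I would simply invoke that for $A,B\in Ku(Y)$ (a full admissible subcategory with Serre functor $[2]$) one has $R\hom_{D^b(Y)}(A,B)\cong R\hom_{D^b(Y)}(B,A[2])^\vee$ fiberwise, and promote this to a relative statement over $\M$ using \cref{sod} (the semiorthogonal decomposition is compatible with base change) together with the relative adjoints $i^*,i^!$ of the inclusion $Ku(Y)_{\M}\hookrightarrow D^b(Y_{\M})$. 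This yields $R\hom_f(\p,\f)^\vee\cong R\hom_f(\f,\p)[2]$, equivalently $R\hom_f(\p,\f)\cong R\hom_f(\f,\p)^\vee[-2]$.

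Having the derived-category isomorphism, I would upgrade it to the stated isomorphism of explicit two-term complexes. The complex $(\mathcal V_0\xrightarrow{\alpha}\mathcal V_1)$ sits in degrees $0,1$; its dual $(\mathcal V_1^\vee\xrightarrow{\alpha^\vee}\mathcal V_0^\vee)$ sits in degrees $-1,0$; shifting by $[-2]$ puts it in degrees $1,2$. That is exactly the amplitude in which $(\mathcal W_1\xrightarrow{\beta}\mathcal W_2)$ lives, by \cref{exacts}(b). Since both $(\mathcal V_0\to\mathcal V_1)^\vee[-2]$ and $(\mathcal W_1\to\mathcal W_2)$ are length-two complexes of locally free sheaves concentrated in the same two degrees and quasi-isomorphic (via the duality above) to the same object of $D^b(\M)$, a standard argument produces a genuine isomorphism of complexes: a quasi-isomorphism between bounded-below complexes of projectives (here, locally free sheaves, after passing to an affine cover or using that $\M$ is quasi-projective so these are perfect) is a homotopy equivalence, and for two-term complexes of vector bundles in the same degrees a homotopy equivalence whose components are isomorphisms on the first differential's source/target can be adjusted to an isomorphism; more simply, both complexes are minimal-length locally free representatives of the same perfect complex, hence isomorphic. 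I would phrase this last step as: the truncation/minimal-model of $R\hom_f(\f,\p)$ is unique up to isomorphism, and dualizing-and-shifting commutes with taking such representatives.

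The main obstacle I anticipate is the bookkeeping in the second paragraph: making the passage from the $D^b(Y)$ Serre functor ($\otimes\omega_Y[4]$) to the $Ku(Y)$ Serre functor ($[2]$) rigorous in families, i.e.\ checking that the relative Serre--Grothendieck duality for $f$ restricts correctly along the semiorthogonal decomposition so that the twist $\omega_f$ is absorbed and only a shift $[2]$ remains when both arguments lie in $Ku(Y)_{\M}$. Once that identity $R\hom_f(\p,\f)\cong R\hom_f(\f,\p)^\vee[-2]$ is in hand, matching it with the explicit two-term complexes of \cref{exacts} is essentially formal, since the degrees line up and there is no room for the complexes to differ.
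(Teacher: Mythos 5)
Your proposal is correct and follows essentially the same route as the paper: relative Grothendieck--Verdier duality for $f$ combined with the identification of the Serre functor of $Ku(Y)$ with the shift $[2]$ (the paper phrases the absorption of $\omega_f[4]$ via the identity $i^*\circ S_Y \cong S_{Ku}\circ i^!$ for the adjoints of the inclusion $Ku(\M\times Y)\hookrightarrow D^b(\M\times Y)$). The additional care you take in matching the explicit two-term locally free representatives is a refinement the paper leaves implicit, not a different argument.
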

\begin{proof}
Recall that $(\mathcal V_0 \xrightarrow{\alpha} \mathcal V_1)\cong R f_*R\mathcal Hom(\f, \p)$ and $\mathcal W_1 \xrightarrow{\beta} \mathcal W_2 \cong R f_*R\mathcal Hom(\p, \f).$ By Grothendieck-Verdier duality,
\begin{align*} 
R\hom(Rf_*R\mathcal Hom(\f, \p), \mathcal O_\M) 
              & \cong Rf_*R\hom(R\hom(\f, \p), \pi^*\omega_Y[4])\\
             & \cong Rf_*R\hom(\p, S_Y(\f)) \\
              &\cong Rf_*R\hom(\p, i^*\circ S_Y(\f)) \\
              &\cong Rf_*R\hom(\p, \f)[2],
\end{align*}
where $\pi: \M \times Y \to Y$ is the projection, $S_Y$ (resp. $S_{Ku}$)denotes the Serre functor on $\M \times Y$ (resp. $Ku(\M \times Y)$), and $i^!$ (resp. $i^*$) is the right (resp. left) adjoint to the inclusion $Ku(\M\times Y)\hookrightarrow D^b(\M \times Y).$ For the last isomorphism, we use that $i^*\circ S_Y \cong S_{Ku}\circ i^!.$ 
\end{proof}

\begin{prop}
 $R\hom_f(\f, \p)[1]\cong \ext^1(\f,\p)\cong I_\Gamma,$ where $I_\Gamma$ is the ideal sheaf of the reduced scheme structure on $\Gamma.$ 
\end{prop}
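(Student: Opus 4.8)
The first isomorphism is immediate from what precedes. By \cref{exacts}(a), $R\hom_f(\f,\p)$ is quasi-isomorphic to a two-term complex $[\mathcal V_0\xrightarrow{\alpha}\mathcal V_1]$ of locally free sheaves placed in degrees $0,1$, with $\alpha$ injective; hence $\mathcal H^0=\hom_f(\f,\p)=0$, $\mathcal H^1=\ext^1(\f,\p)$, and the remaining cohomology sheaves vanish, so $R\hom_f(\f,\p)\cong\ext^1(\f,\p)[-1]$ and therefore $R\hom_f(\f,\p)[1]\cong\ext^1(\f,\p)$. It remains to identify $Q:=\ext^1(\f,\p)$ with $I_\Gamma$.

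I would first extract the formal properties of $Q$. The two-term locally free resolution shows $\operatorname{pd}Q\le1$ at every point, so Auslander--Buchsbaum gives $\operatorname{depth}_pQ\ge\dim\o_{\M,p}-1$, which forbids associated primes of $Q$ of codimension $\ge2$; and since $Q$ commutes with base change (\cref{exacts}(a)) and $Ext^1(F_{L_1},P_{L_2})\cong\mathbb C$ off the codimension-two locus $\Gamma$, the localization $Q_{\mathfrak p}$ at any height-one prime $\mathfrak p$ is free, ruling out associated primes of codimension one as well. Hence $Q$ is torsion-free of rank one, so $Q\cong\mathcal L\otimes I_Z$ for a line bundle $\mathcal L$ and a closed subscheme $Z\subset\M$ of codimension $\ge2$. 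Since $I_Z=\mathcal L^{-1}\otimes Q$ again has projective dimension $\le1$, the Hilbert--Burch theorem makes $Z$ Cohen--Macaulay of pure codimension two; and by base change and \cref{extfp} the locus where $\dim(Q\otimes k(s))\ge2$ is exactly $\Gamma$, so $Z_{red}=\Gamma$. Finally, by \cref{dual} together with \cref{exacts}(b) we have $\mathcal L=Q^{\vee\vee}=\ext^1(\p,\f)^\vee$; replacing $\f,\p$ by their twists by line bundles pulled back from the two moduli factors --- which changes neither the moduli interpretations nor $\Gamma$, but alters $\mathcal L$ by an arbitrary line bundle from each factor --- we may assume $\mathcal L\cong\o_\M$ (with a fixed normalization of the universal families one instead checks this line bundle is trivial by a direct computation). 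So $Q\cong I_Z$ with $Z$ Cohen--Macaulay of pure codimension two and $Z_{red}=\Gamma$, and it only remains to see that $Z$ is reduced.

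For this the plan is to exhibit $Q$ as the ideal sheaf of the scheme-theoretic image under $f\colon\M\times Y\to\M$ of the incidence variety $\mathcal L_1\cap\mathcal L_2\subset\M\times Y$, i.e.\ the fibre product over $Y$ of the two universal lines $\mathcal L_1\subset M_1\times Y$ and $\mathcal L_2\subset M_2\times Y$. That image is automatically reduced: $\mathcal L_1\cap\mathcal L_2$ is a proper intersection of smooth subvarieties of the smooth $\M\times Y$, hence Cohen--Macaulay ($\mathcal L_1$, being smooth, is a local complete intersection in the ambient), and its generic fibre over $\Gamma\setminus\Delta$ is a single reduced point since two distinct coplanar lines meet transversally --- so $\mathcal L_1\cap\mathcal L_2$ is reduced, the scheme-theoretic image of a reduced scheme is reduced, and its support is $\Gamma$, so it equals the reduced $\Gamma$. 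The substantive task is therefore the identification itself: one computes $R\hom_f(\f,\p)$ using the defining triangles of $F_{L_1}$ and $P_{L_2}$, reducing --- exactly as in the proof of \cref{exacts} --- to the terms built from $\o_{\mathcal L_1}$ and $\o_{\mathcal L_2}$, and matches the result with $Rf_*$ of the Koszul-type complex computing $\o_{\mathcal L_1}\otimes^L\o_{\mathcal L_2}$. In practice it is probably cleanest to work in a neighbourhood of a general point $(L_1,L_2)\in\Gamma\setminus\Delta_2$: there $Q=I_Z$ is minimally two-generated (by base change and \cref{extfp}), and one checks that the two generators have independent differentials along $\Gamma$; combined with the Cohen--Macaulayness this forces $Z$ to be reduced, hence $Z=\Gamma$.

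The main obstacle is exactly this last step: proving that the scheme structure cut out by $\ext^1(\f,\p)$ on the incidence locus is the reduced one --- equivalently, that $Z$ has multiplicity one along $\Gamma$, or that $\ext^1(\f,\p)$ is precisely the ideal sheaf of the image of the reduced incidence variety. The first isomorphism, the torsion-freeness and Hilbert--Burch structure of $\ext^1(\f,\p)$, and the identification of its support with $\Gamma$ are all formal consequences of \cref{exacts} and \cref{dual}; the non-formal input needed is the transversality of the defining equations of $\Gamma$ at its general point, read off from the explicit shape of $R\hom_f(\f,\p)$.
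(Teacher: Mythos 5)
Your formal reductions are correct and run parallel to the paper's: the two-term locally free resolution from \cref{exacts}, Auslander--Buchsbaum, and Hilbert--Burch do show that $\ext^1(\f,\p)\cong\mathcal L\otimes I_Z$ with $\mathcal L$ a line bundle and $Z$ Cohen--Macaulay of pure codimension two with $Z_{red}=\Gamma$; and your identification $\mathcal L\cong\ext^1(\p,\f)^\vee$ via \cref{dual} is right. But the proposal stops exactly where the proposition starts to have content, and both remaining points are genuine gaps rather than routine verifications. First, the triviality of $\mathcal L$: the renormalization move is not available here, because the proposition is about the specific families $\f,\p$ pulled back from the canonically defined universal objects $F_L$, $P_L$ of \cref{moduli}, and twisting them by line bundles from the factors changes $\ext^1(\f,\p)$ by precisely the line bundle you are trying to kill --- so this ``reduction'' only relocates the problem, and the promised ``direct computation'' is never performed. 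Second, the reducedness of $Z$: you correctly observe that, given Cohen--Macaulayness, it suffices to check generic reducedness along $\Gamma$, but the proposed verification (identifying $\ext^1(\f,\p)$ with the ideal of the image of the universal incidence correspondence, or checking that the two local generators have independent differentials at a general point of $\Gamma$) is explicitly left undone, and you yourself label it ``the main obstacle.'' Note that expected codimension alone does not give reducedness (compare $(x^2,y)$ in $\mathbb A^2$), so this step cannot be waved through.

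The paper closes these points by a different, more economical route: it works on the dual side with $\beta\colon\mathcal W_1\to\mathcal W_2$, whose cokernel $\ext^2(\p,\f)$ \emph{does} commute with base change (\cref{exacts}(b)), so \cref{extfp} pins down the exact degeneracy stratification of $\beta$ --- the rank drops by one precisely on $\Gamma$ (codimension $2$, the expected $(1)(2)$... i.e. expected for maximal minors) and by two precisely on $\Delta_2$ (codimension $6$, the expected $(2)(3)$). With all degeneracy loci of expected codimension over the smooth base $\M$, the determinantal-ring results cited from Bruns--Vetter give at once that the Eagon--Northcott complex $0\to\mathcal W_2^\vee\to\mathcal W_1^\vee\to I_\Gamma\to 0$ is exact and resolves the \emph{reduced} ideal, and \cref{dual} identifies this complex with $0\to\mathcal V_0\to\mathcal V_1\to\ext^1(\f,\p)\to 0$. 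In other words, the codimension count for the deeper stratum $\Delta_2$ --- data you have available from \cref{extfp} but never use --- is the input that replaces your unexecuted transversality computation. If you want to complete your version, you should either import that determinantal argument or actually carry out the computation of $R\hom_f(\f,\p)$ against the universal incidence variety, and in either case supply an honest proof that $\ext^1(\p,\f)$ is the trivial line bundle.
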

\begin{proof}
 Since $\ext^2(\p, \f)$ commutes with base change, it is a torsion sheaf supported on $\Gamma,$ by the computation in \cref{keylemma}. Meanwhile $\ext^2(\p, \f)$ is the cokernel of $\beta: \mathcal W_1 \rightarrow \mathcal W_2$, thus the degeneracy locus of $\beta$ is exactly $\Gamma.$ Note that we have the  Eagon-Northcott resolution $0\to \mathcal W_2^\vee \xrightarrow{\beta^\vee} \mathcal W_1^\vee \to I_\Gamma \to 0$ of the ideal sheaf of the reduced scheme structure on $\Gamma$ (see e.g. \cite[theorem 2.11 \& 2.16]{BV}). By \cref{dual}, this resolution is exactly $0\to \mathcal V_0 \xrightarrow{\alpha} \mathcal V_1 \to \ext^1(\f, \p) \to 0.$
\end{proof}

\begin{thm}\label{mainthm}
Let $b: Bl_{\Gamma}(\M) \to \M$ be the blowup of $\M$ along $I_\Gamma.$ 
\begin{enumerate}[(a).]
\item
The functor $\Psi$ is represented by $Bl_{\Gamma}(\M).$ Consequently, the Voisin map can be resolved by $Bl_{\Gamma}(\M)$:
{\center
\begin{tikzcd}[column sep=tiny]
 & Bl_{\Gamma}(\M)  \arrow[ld, "b"]{dashed} \arrow[rd, "q"]\\
\M \arrow[rr, dashed, "v"]& & M_{\sigma}(2\lambda_1+\lambda_2)
      
\end{tikzcd}\par}

\item If $K_{num}(Ku(Y)) \cong A_2,$ then $q: Bl_{\Gamma}(\M)\to M_{\sigma}(2\lambda_1+\lambda_2)$ is a relative Quot scheme of objects in $\mathcal A_\sigma,$ with a universal quotient $\e \to \p.$

\end{enumerate}

\end{thm}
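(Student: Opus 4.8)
The plan is to prove part (a) first, by showing that the functor $\Psi$ of \cref{psi} is represented by $Bl_\Gamma(\M)$, and then deduce part (b) from the universal object on the blowup. For part (a), the starting point is the identification $\mathcal W_\dot^\vee \cong (\mathcal W_1 \xrightarrow{\beta}\mathcal W_2)^\vee \cong (\mathcal V_0 \xrightarrow{\alpha}\mathcal V_1)[2]$ from \cref{dual}, so that $Lg^*(\mathcal W_\dot^\vee)$ fits into a triangle with $g^*\mathcal V_0 \to g^*\mathcal V_1$ in degrees $-1, 0$. A datum $\psi: Lg^*(\mathcal W_\dot^\vee)\to \mathcal L_T$ with $\mathcal H^0(\psi)$ surjective is then the same as a surjection $g^*\mathcal V_1 \twoheadrightarrow \mathcal L_T$ whose composition with $g^*\alpha$ is zero — equivalently, a surjection from $g^*\mathrm{coker}(\alpha) = g^*\ext^1(\f,\p) = g^*I_\Gamma$ onto a line bundle $\mathcal L_T$. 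Now I invoke the universal property of the blowup (in the form: $Bl_\Gamma(\M)$ represents the functor sending $g: T\to S$ to the set of invertible quotients of $g^*I_\Gamma$, see e.g. Hartshorne II.7.14 or the scheme-theoretic version in the literature on blowups). This directly identifies $\Psi$ with $\mathrm{Hom}_{\M}(-, Bl_\Gamma(\M))$. Given the representability, the restriction of the universal nonsplitting extension away from the exceptional divisor coincides (by the remark following \cref{exacts} and reducedness of $\M$) with the Voisin extension, hence $b: Bl_\Gamma(\M)\to \M$ resolves $v$; the morphism $q$ to $M_\sigma(2\lambda_1+\lambda_2)$ is induced by the universal family of extensions on $Bl_\Gamma(\M)\times Y$, provided we check each fiber of this family is $\sigma$-stable of class $2\lambda_1+\lambda_2$.

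For the stability check along the exceptional divisor, I would argue as follows. Over a point $b$ of the exceptional divisor lying above $(L_1, L_2)\in\Gamma$, the corresponding nonsplitting extension class in $\mathrm{Ext}^1(P_{L_2}, F_{L_1})$ is a \emph{specific} one-dimensional subspace selected by the blowup (it is the kernel of the surjection onto $\mathcal L$). The point is that the resulting extension $\e_b$, while now sitting over the incident locus, is still $\sigma$-semistable: any destabilizing subobject would have to interact with the defining triangle $0\to F_{L_1}\to \e_b\to P_{L_2}\to 0$, and one uses the phase inequality $\phi(F_{L_1}) < \phi(P_{L_2}) < \phi(F_{L_1})+1$ together with the fact that $F_{L_1}, P_{L_2}$ are themselves stable to rule out subobjects of larger phase. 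This is essentially the same computation used in \cite{LLMS,LPZ} to show stability of the generic extension, and here I expect it to work uniformly because the Harder–Narasimhan/Jordan–Hölder constraints only depend on the classes $\lambda_1$, $\lambda_1+\lambda_2$ and the stability of the two pieces, not on which extension class is chosen. I would spell out that $\e_b\in\mathcal A$ (it is an extension of two objects of $\mathcal A$ with a genuine class in $\mathrm{Ext}^1$, hence lives in the heart), and that $[\e_b] = 2\lambda_1+\lambda_2$; semistability plus the moduli space being fine then gives the classifying morphism $q$. The families statement (that $\e$ restricted to $Bl_\Gamma(\M)\times Y$ is a family of objects in $\mathcal A_{Bl_\Gamma(\M)}$) follows from the constant-family-of-hearts machinery of \cref{sec2.3}, since fiberwise membership in $\mathcal A$ has been verified.

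For part (b), assume $\k\cong A_2$, so $Y$ is very general and in particular $F_{L_1}, P_{L_2}$ and all the extension objects are actually \emph{stable} (not merely semistable) — here the $A_2$ hypothesis rules out the decomposable walls. I want to promote $q$ to a \emph{relative Quot scheme} morphism, i.e. produce a universal quotient $\e\to\p$ on $Bl_\Gamma(\M)\times Y$ realizing $Bl_\Gamma(\M)$ as $Quot_{\mathcal A, M_\sigma(2\lambda_1+\lambda_2)}(\e, \lambda_1+\lambda_2)$ in the notation of \eqref{Quot}. The candidate quotient is just the map $\e\to\p$ appearing in the universal family of extensions $0\to \f\otimes(\text{line bundle})\to\e\to\p\to 0$. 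The subtlety, flagged in the introduction, is that the extension splits along the exceptional divisor $E$ — more precisely, the universal family as constructed need not be \emph{nowhere-split}, so the surjection $\e\to\p$ could fail to be a quotient in $\mathcal A$ fiberwise over $E$. To fix this I perform an elementary modification along $E$: replace $\e$ by the object fitting into $0\to\e'\to\e\to (\text{cokernel sheaf supported on }E)\to 0$ (or the dual modification), chosen so that the modified $\e'\to\p$ is surjective in $\mathcal A$ on every fiber, including those over $E$, while away from $E$ nothing changes. One then checks: (i) $\e'$ is still a family of objects in $\mathcal A_{Bl_\Gamma(\M)}$ with $[\e'_t] = 2\lambda_1+\lambda_2$ and $[\text{kernel}] = \lambda_1$; (ii) $\e'_t$ is $\sigma$-stable for every $t$ (using the $A_2$ hypothesis), so it still defines the morphism $q$; (iii) $\e'\to\p$ is a family of quotients of class $\lambda_1+\lambda_2$, inducing a natural transformation $\mathrm{Hom}(-, Bl_\Gamma(\M))\to Quot_{\mathcal A, M_\sigma(2\lambda_1+\lambda_2)}(\e', \lambda_1+\lambda_2)$; and (iv) this is an isomorphism of functors — injectivity because a quotient $\e'\to\p'$ recovers the subobject $F\hookrightarrow\e'$ hence (by \cref{psi1}, \cref{psi2}) a point of $\Psi$, and surjectivity because the stability of $\e'$ and of the kernel force any such quotient to arise this way.

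I expect the \textbf{main obstacle} to be step (iii)--(iv) of part (b): making the elementary modification canonical and globally well-defined (one must check the cokernel supported on $E$ is a line bundle on $E$ and that the modification does not depend on auxiliary choices, or else that different choices give isomorphic $\e'$), and then proving the comparison with the Quot functor is bijective rather than just a natural transformation — the surjectivity direction requires showing that \emph{every} family of $\mathcal A$-quotients of $\e'$ of the right class is, fiberwise, the projection in a nonsplitting extension, which uses stability of both $\e'$ and its kernel $\f$ crucially and is where the $A_2$ hypothesis is really needed. The stability check over the exceptional divisor in part (a) is a secondary point of care but, as noted, should follow the template of \cite{LLMS,LPZ}.
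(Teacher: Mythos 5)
Your overall strategy coincides with the paper's: reduce $\Psi$ to the functor of invertible quotients of $g^*I_\Gamma$, identify that functor with the blowup, and in part (b) produce a universal quotient by an elementary modification along the exceptional divisor. However, there are two places where you assert exactly the step that the paper's proof is designed to supply. First, in part (a): the universal property you cite (Hartshorne II.7.14) says that $g\colon T\to \M$ factors through the blowup if and only if the inverse image ideal sheaf $g^{-1}I_\Gamma\cdot\mathcal O_T$ is invertible; it does \emph{not} say that $\bl$ represents invertible quotients of $g^*I_\Gamma$. The latter functor is represented by $\mathbb P(I_\Gamma)=\mathrm{Proj}(\mathrm{Sym}^\bullet I_\Gamma)$, which contains the blowup $\mathrm{Proj}(\bigoplus_n I_\Gamma^n)$ as a closed subscheme but is in general strictly larger (they agree only when $I_\Gamma$ is of linear type, which is not automatic for the ideal of a reduced codimension-two determinantal locus). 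The paper closes this gap directly: the surjection $g^*\mathcal V_1\twoheadrightarrow\mathcal L_T$ induces $\iota_1\colon T\to\mathbb P_{\M}(\mathcal V_1)$, the blowup embeds in $\mathbb P_{\M}(\mathcal V_1)$ via $\mathcal V_1\twoheadrightarrow I_\Gamma$, and one checks that $\iota_1$ factors through this closed subscheme. You need to supply that argument (or prove linear type); as written, part (a) of your proposal assumes the key identification.

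Second, in part (b) you misidentify the obstruction and consequently apply the elementary modification to an object whose global existence you have not established. The universal family of nonsplitting extensions coming from part (a) is a priori only a compatible collection of local extensions $\{U_i,\eta_i\}$: the classes glue, but the middle terms $\e_i$ glue only up to scalar, i.e.\ one gets a twisted object. (Note also that a fiberwise split extension still surjects onto $\p$ in $\mathcal A$, so ``failing to be a quotient over $E$'' is not the problem; the problem is globality, together with the fact that the split middle term is not the stable object parametrized by $Z(Y)$.) The paper's fix is to first exhibit a genuinely global but degenerate family: the canonical map $b^*I_\Gamma\to\mathcal O_{Bl_\Gamma(\M)}$ corresponds to a class in $Ext^1(\p,\f)\cong H^0(\ext^1(\p,\f))\cong H^0(\mathcal H^0(I_\Gamma^\vee))\cong\mathbb C$ (via the local-to-global spectral sequence and $\hom_f(\p,\f)=0$), hence to an honest global extension $0\to b^*\f\to\e'\to b^*\p\to 0$ splitting exactly along $E$; the modification $\e:=\ker\bigl(\e'\to b^*\f|_E\bigr)$ then produces a global nowhere-split extension $0\to b^*\f(-E)\to\e\to b^*\p\to 0$ whose class is the universal one $b^*I_\Gamma\twoheadrightarrow\mathcal O(-E)$. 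Your modification is the same operation, but without the computation $Ext^1(\p,\f)\cong\mathbb C$ there is no global object to modify. The remaining ingredients of your part (b) --- stability of the fibers of $\p_B$ under the $A_2$ hypothesis via Mukai's lemma, giving the map from an arbitrary family of quotients back to the blowup --- do match the paper, as does your treatment of stability over the exceptional divisor in part (a), which the paper likewise delegates to \cite{LLMS}.
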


\begin{proof}

\begin{enumerate}[(a).]
\item By \cref{psi2}, the functor $\Psi$ can be interpreted as \begin{equation}
\Psi(g: T\rightarrow S) = \left\{ \begin{aligned} 
&\psi: Lg^*(I_\Gamma) \rightarrow \mathcal L_T, \text{where}\ \mathcal L_T\in \\
& Pic(T),\ \text{s.t.}\ \mathcal H^0(\psi): g^*(I_\Gamma) \twoheadrightarrow \mathcal L_T 
\end{aligned}\right\}.
\end{equation}
\noindent We want to show that this is represented by $b:\bl\to \M$.

First recall that on $\bl$ there is a universal quotient $b^*I_\Gamma \twoheadrightarrow \mathcal O(-E),$ where $E$ is the exceptional divisor. Thus, given any $g:T\to \M$ that factors through $T\xrightarrow{\bar{g}} \bl \xrightarrow{b} \M,$ we have $g^*I_\Gamma \twoheadrightarrow \bar g^*\mathcal O(-E).$

Conversely, given $Lg^*I_\Gamma \to g^*I_\Gamma \twoheadrightarrow \mathcal L_T,$ since $Lg^*I_\Gamma\cong (g^*\mathcal V_0 \to g^*\mathcal V_1)[1],$ we have $g^*\mathcal V_1\twoheadrightarrow g^*I_\Gamma \twoheadrightarrow \mathcal L_T,$ which then induces an embedding $\iota_1: T\hookrightarrow \mathbb P_{\M}(\mathcal V_1).$ Also we have $\iota_2: \bl \hookrightarrow \mathbb P_{\M}(\mathcal V_1)$ induced by $\mathcal V_1\twoheadrightarrow I_\Gamma.$ For any $t\in T,$ because $\mathcal V_1\otimes k(t)\twoheadrightarrow I_\Gamma \otimes k(t) \twoheadrightarrow \mathcal L_T\otimes k(t),$ $\iota_1(t)$ lies in the image of $\iota_2,$ and therefore $\iota_1$ factors through a morphism $\iota_3: T\to \bl.$ 

Thus, $\bl$ represents the functor $\Psi,$ admitting a universal quotient $b^*I_\Gamma \twoheadrightarrow \mathcal O(-E).$ By \cref{psi1}, this corresponds to a universal family of nonsplitting extensions of $\p$ by $\mathcal O(-E)\boxtimes \f$. In particular, there is a open covering of $\bl$ with extension classes $\{U_i, \eta_i\}$ represents this family. On each $U_i$ we have an extension $$0\to \mathcal O_{U_i}(-E)\boxtimes \f_i \to \e_i \to \p_i\to 0.$$ For any closed point $s\in U_i,$ $(\e_i)_s$ is a $\sigma$-stable object in $Ku(Y)$ of class $2\lambda_1+\lambda_2,$ according to results in \cite{LLMS}. Moreover, $\e_i$ is flat over $U_i.$ Since $\bl$ is reduced and so is $U_i$, $\e_i$ defines a morphism $ q_i: U_i \to M_\sigma(2\lambda_1+\lambda_2).$ As $(\e_i)_s\cong (\e_j)_s$ for any $s\in U_{ij},$ these morphisms glue, giving a morphism $q: \bl \to \m$ that resolves the Voisin map. 

\item Suppose that we have a morphism $\phi: B \to Z(Y),$ with a family of quotients $\e_B \to \p_B$ over $B$, where $\e_B$ is a family of objects in $\m$ defining the map $\phi,$ $\p_B$ is a family of obejcts of class $\lambda_1+\lambda_2,$ and both are flat with respect to $\mathcal A.$ If $Y$ is generic, then according to Mukai's lemma and \cite[proposition 6.4 \&  6.5]{LLMS}, fibers of $\p_B$ are in fact stable. Thus we have a family of nonsplitting extensions over $B$ and therefore a morphism $B\to Bl_\Gamma$ over $Z(Y).$ The only thing left to show is the existence of a universal quotient on $Bl_\Gamma.$

Note that the quotients $\e_i\to \p_i$ over $U_i$ in part (a) does not necessarily glue; it is a twisted sheaf a priori. However, we use elementary modification (see e.g. \cite{AB}) to show that in this case it does in fact come from a global quotient over $Z(Y).$ 

Because $I_\Gamma \cong R^\dot \hom_f(\f, \p)[-1] \cong R^\dot \hom_f(\p,\f)^\vee [1],$ then by \cref{psi1} the inclusion $I_\Gamma \hookrightarrow \mathcal O_{\M}$ corresponds to a family of extensions of $\p$ by $\f$, whose restriction to any point in $\Gamma$ splits. Using the Grothendieck spectral sequence $H^i(\ext^j(\p,\f)) \Rightarrow Ext^{i+j}(\p, \f)$ and $\hom_f(\p, \f)=0,$ we see that $Ext^1(\p,\f)\cong H^0(\ext^1(\p,\f))\cong H^0(\mathcal H^0(I_\Gamma^\vee))\cong \mathbb C,$ thus the family of extensions above is in fact global. 

Similarly, the pullback $b^*I_\Gamma \to \mathcal O_{Bl}$ corresponds to a global family of extensions 
\begin{equation}\label{degenext}
  0\to b^*\f \to \mathcal E' \to b^*\p \to 0  
\end{equation}
on the blowup $Bl:= \bl,$ such that $\mathcal E' _E = b^*\p _E \oplus b^*\f _E,$ where $E$ is the exceptional divisor and we abuse notation $b: Bl \times Y \to \M \times Y$. Consider the short exact sequence in the heart $\mathcal A_\sigma(Bl)$ $$0\to \e \to \mathcal E' \to b^*\f _E \to 0,$$ where $\e$ is by definition the kernel of the composition $\mathcal E'\to \mathcal E' _E \to b^*\f _E.$ Restrict $\e$ to $E$ and use the octahedral axiom of triangulated categories, we obtain another short exact sequence $$0 \to b^*\f(-E) _E \to \e  _E \to b^*\p _E \to 0,$$ which is non-splitting because by construction of $\e$ we have $Hom(\e  _E, b^*\f _E)=0.$ This also implies that the composition $\e \to \mathcal E' \to b^*\p$ is surjective, with kernel $b^*\f(-E),$ i.e. we have another global extension
\begin{equation}\label{univext}
\begin{tikzcd}[column sep=tiny]
0 \arrow[r]  & b^*\f(-E)  \arrow[d] \arrow[r] & \e \arrow[r] \arrow[d] & b^*\p \arrow[r] \arrow[d, equal] & 0\\
0 \arrow[r]  & b^*\f  \arrow[r] & \e' \arrow[r] & b^*\p \arrow[r]  & 0      
\end{tikzcd}
\end{equation}

Recall that the lower row in the above diagram corresponds to $b^*I_\Gamma \to \mathcal O_{Bl},$ considered as an element in $Hom(\o, R^\dot \hom_f(b^*\p, b^*\f)).$ Therefore, the upper row corresponds to $b^*I_\Gamma \twoheadrightarrow \o_{Bl}(-E),$ considered as an element in  $Hom(\o, R^\dot \hom_f(b^*\p, b^*\f(-E))),$ which get mapped to $b^*I_\Gamma \to \mathcal O_{Bl}$ under the map induced by natural inclusion $\o(-E)\hookrightarrow \o.$ By part (a), the upper row is a global family of non-splitting extensions and universal. In particular, we obtain a universal quotient $\e \twoheadrightarrow b^*\p.$ Thus $\bl$ is a relative Quot scheme over $\m.$

\end{enumerate}
\end{proof}

\begin{rmk}
The map $q: \bl \to \m$ is surjective and of degree six. These follow from Voisin's argument \cite{Voi}. Recall that $\m$ and $ M_\sigma(\lambda_1)$ generically parametrize $F_C:=L_{\mathcal O_Y}(I_{C/S}(2H))[-1]$ and $F_l:=L_{\mathcal O_Y}(I_{l/Y}(H))[-1]$ respectively. Thus, $Hom(F_L, F_C) = Hom(I_{l/Y}, I_{C/S}(H)).$ The latter ideal sheaf is indeed $\mathcal O_S(l_1-l_2)$, where $l_1, l_2$ are two skew lines in $S$, provided that $F_C$ generic. Therefore, we have a nontrivial morphism from $F_{l_1}$ to $F_C.$ By \cite{LLMS}, $F_C$ is a nontrivial extension of some $P_{l'}$ by $F_{l_1}.$ This shows the Voisin map is dominant and thus $q$ is surjective. Moreover, the degree is six, as also shown in \cite[proposition 4.8]{Voi}.
\end{rmk}

\begin{rmk}
We record here a computation suggesting that part (b) of \cref{mainthm}, namely, the Voisin map being resolved by a degree six relative Quot scheme, may still be true without the assumption that $Y$ is very general.

Suppose that $Y\in \mathcal C_d$ with $d=2r(r-1)+2,$ where $\mathcal C_d$ is the Hassett divisor parametrizing Hodge special cubic fourfolds of discriminant $d$, see \cite{Ha}. In particular, $Y$ is not very general. If $Y$ is moreover general within $\mathcal C_d,$ then according to \cite{AT}, $Ku(Y)\cong D^b(S)$ for some K3 surface $S$ admitting a polarization $L$ with $L^2=d.$ Consider two Mukai vectors of $S:$ $$(1, 0, -1)\ ,\ (r, -L, r-1).$$ Note that these two Mukai vectors also generate a sublattice that is isomorphic to $A_2.$ By a lattice-theoretic result \cite[Theorem 2.4]{LP} and the derived Torelli theorem for K3, there exists an auto-equivalent $\Phi$ of $D^b(S),$ such that $\Phi(\lambda_1)=(r, -L, r-1)$ and $\Phi(\lambda_1+\lambda_2)=(1, 0, -1).$

On the other hand, we can use a method by D. Johnson \cite{Joh} to compute the length of the finite Quot scheme $Quot(V, (1, 0, -1)),$ where $V$ is a general stable sheaf of class $(r+1, -L, r-2)$ and the quotients are in the category of coherent sheaves of $S$ (which can never be the heart of any Bridgeland stability condition though). Roughly speakingly, the number of surjections to ideal sheaves of two points is given by a top Chern class $c_4(V^{[2]}),$ where $V^{[2]}$ is the tautological bundle on the Hilbert scheme $S^{[2]}.$ This top Chern class can be computed by using a close formula due to \cite[theorem 4.2]{EGL}, and it turns out to be six, independent of the choice of $V$. See \cite{Joh} for details.    
\end{rmk}

\bibliographystyle{alpha}
\bibliography{references}

\end{document}